 \patchcmd\Gread@eps{\@inputcheck#1 }{\@inputcheck"#1"\relax}{}{}
\patchcmd\Gread@eps{\@inputcheck#1 }{\@inputcheck"#1"\relax}{}{}
 \patchcmd\Gread@eps{\@inputcheck#1 }{\@inputcheck"#1"\relax}{}{}
\newcommand{\intav}[1]{\mathchoice {\mathop{\vrule width 6pt height 3 pt depth  -2.5pt
\kern -8pt \intop}\nolimits_{\kern -6pt#1}} {\mathop{\vrule width
5pt height 3  pt depth -2.6pt \kern -6pt \intop}\nolimits_{#1}}
{\mathop{\vrule width 5pt height 3 pt depth -2.6pt \kern -6pt
\intop}\nolimits_{#1}} {\mathop{\vrule width 5pt height 3 pt depth
-2.6pt \kern -6pt \intop}\nolimits_{#1}}}
\def\polhk#1{\setbox0=\hbox{#1}{\ooalign{\hidewidth\lower1.5ex\hbox{`}\hidewidth\crcr\unhbox0}}}
\newcommand{\co}{\operatorname{co}}
\newcommand{\tr}{\operatorname{Tr}}
\newcommand{\bd}{{\bf d}}
\newtheorem{teo}{Theorem}
\newtheorem{Definition}{Definition}
\newtheorem{Lemma}{Lemma}
\newtheorem{Corollary}{Corollary}
\newtheorem{Proposition}{Proposition}
\newtheorem{Remark}{Remark}
\newtheorem{Assumption}{A}
\begin{document}


\title{Stationary fully nonlinear mean-field games}
\author{P\^edra D. S. Andrade,\;\; Edgard A. Pimentel}
\date{\today}

\maketitle

\begin{abstract}
\begin{spacing}{1.15}
\noindent In this paper we examine fully nonlinear mean-field games associated with a minimization problem. The variational setting is driven by a functional depending on its argument through its Hessian matrix. We work under fairly natural conditions and establish improved (sharp) regularity for the solutions in Sobolev spaces. Then, we prove the existence of minimizers for the variational problem and the existence of solutions to the mean-field games system. We also investigate a unidimensional example and unveil new information on the explicit solutions. Our findings can be generalized to a larger class of operators, yielding information on a broader range of examples. 
\end{spacing}

\medskip

\noindent \textbf{Keywords}: Fully nonlinear mean-field games, improved regularity, existence of solutions, Hessian-dependent variational problems.

\medskip 

\noindent \textbf{MSC(2010)}: 35B65; 35J35; 35A01.
\end{abstract}

\vspace{.1in}
	
\section{Introduction}\label{sec_intro}

In the present work we examine fully nonlinear mean-field games (MFG, for short) in close connection with a minimization problem driven by a Hessian-dependent Lagrangian. We consider the system
\begin{spacing}{1.0} 
\begin{equation}\label{eq_main}
	\begin{cases}
		F(D^2u)\,=\,m^{1/(p-1)}&\;\;\;\;\;\mbox{in}\;\;\;\;\;B_1\\
		\left(F_{ij}(D^2u)\,m\right)_{x_ix_j}\,=\,0&\;\;\;\;\;\mbox{in}\;\;\;\;\;B_1,
	\end{cases}
\end{equation}
\end{spacing}
\vspace{.09in}
\noindent where $F$ is a fully nonlinear $(\lambda,\Lambda)$-elliptic operator and $p\geq 2$. In \eqref{eq_main}, $F_{ij}(M)$ stands for the derivative of $F(M)$ with respect to the entry $m_{ij}$ of the matrix $M=(m_{ij})_{i,j=1}^d$. We refer to \eqref{eq_main} as \emph{mean-field game} since the system has an \emph{adjoint structure}. That is to say that the second equation is the formal adjoint of the linearization of the first. 

The MFG system in \eqref{eq_main} is obtained as the first compact variation of
\[
	I[u]\,:=\,\int_{B_1}\left[F(D^2u)\right]^p\,\bd x,
\]
where $I$ is defined over an appropriate class of functions. Therefore, we can understand the system in \eqref{eq_main} as the Euler-Lagrange equation associated with the minimization problem
%
\begin{equation}\label{eq_mainvar}
	\int_{B_1}\,\left[F(D^2u)\right]^{p}\,\bd x\;\;\longrightarrow \;\;\min.
\end{equation}

The contribution of the present work is to explore the interplay between \eqref{eq_main} and \eqref{eq_mainvar} and to establish new results covering the regularity theory and the well-posedness for the MFG system, as well as a characterization of the minimization problem in \eqref{eq_main}. 

In particular, we establish improved (sharp) regularity for the solutions to \eqref{eq_main} in Sobolev spaces. Then, under general conditions on $F$ -- which include convexity -- we prove the existence of minimizers for \eqref{eq_mainvar}. Then, a further argument yields the existence of solutions to the associated MFG system. We also consider the case of non-convex operators $F$; here, a relaxation argument produces information on the minimum of the associated energy. Finally, we study a one-dimensional toy-model and unveil distinctive properties of the problem \eqref{eq_main}-\eqref{eq_mainvar}. Although we have chosen to present our results in the context of toy-model \eqref{eq_main}, our techniques account for more general formulations, i.e., including lower order terms. The prototypical gradient-dependence we could include is of the form
\begin{equation}\label{eq_chatopracaralho}
	H(p,x)\,\sim\,A(x)\left(1\,+\,\left|p\right|^2\right)^\frac{q}{2}\,+\,V(x),
\end{equation}
where $A\in L^\infty(B_1)$ and $V$ is a merely bounded and measurable potential.
 
The equations comprising \eqref{eq_main} are a fully nonlinear PDE and an elliptic equation in the double-divergence form. Interesting on their very own merits and leading foundational developments in the profession, those equations have been largely studied in the course of the last fifty years. Clearly, to put together a meaningful list of references on their regard is out of the scope of this paper. Therefore, we refrain from mentioning any further work than the monographs  \cite{ccbook} and \cite{bkrsbook}.

The fully nonlinear equation in \eqref{eq_main} can be regarded as the Hamilton-Jacobi associated with an (stochastic) optimal control problem. Of particular interest in the nonlinear dependence on the Hessian of the value function. Notice also the introduction of the associated density in the underlying cost functional, through the mapping $z\mapsto z^\frac{1}{p-1}$. Finally, it is worthy noticing that the choice for this dependence is two-fold. First, it appears naturally in the variational derivation of \eqref{eq_main}. Moreover, it describes a cost functional that penalizes crowds.

The second equation in \eqref{eq_main} is a Fokker-Planck whose coefficients depend on the value function $u$. In fact, it describes the distribution associated with the stochastic process with infinitesimal generator
\[
	F_{ij}(D^2u)\frac{\partial^2}{\partial x_i\partial x_j}.
\]
This description is useful in framing the fully nonlinear model \eqref{eq_main} into the context of the toy-models appearing in the MFG literature.

The mean-field games theory was introduced in \cite{ll1,ll2,ll3} as a mathematical framework to model scenarios of strategic interactions involving a (very) large number of players. Its mathematical formulation is completely described by the so-called \emph{master equation}. Under additional assumptions, on the independence of the underlying stochastic process, models simplify substantially. Here, the work-horse of the theory is the coupling of a Hamilton-Jacobi and a Fokker-Planck equation.  In this context, several authors advanced the topic in a variety of directions. The existence (and uniqueness) of solutions -- both in the elliptic and parabolic settings -- is the object of \cite{cgbt,cd2,CLLP,cllp13,cargra,LCDF,pim4,pim2,pim3,marcir1,marcir2,marcir3,marcir4}, whereas numerical developments are reported is \cite{achdou2013finite,MR2928376,CDY,DY}, among others. Applications of MFG theory to life and social sciences can be found, for instance in \cite{moll,bmoll1,bmoll2,bmoll3}. Finally, the analysis of the master equation has been advanced in \cite{LCDF,carpor,benone,bentwo,cardel1,delaruegalera,gangboswiech}. See also the monographs \cite{cardaliaguet,bensoussan,pim1}.

Since MFG theory embeds in the context of (stochastic) optimal control problems, it is relevant to make sure that the formulation in \eqref{eq_main} is in line with this framework. In this regard, we notice that Bellman operators are typically of class $\mathcal{C}^{0,1}$, but fail to be $\mathcal{C}^1$-regular. Nevertheless, to make sense of the second equation in \eqref{eq_main}, the Lipschitz regularity implied by the $(\lambda,\Lambda)$-uniform ellipticity is enough.

A distinctive feature associated with mean-field games systems concerns gains of regularity for the solutions, \textit{vis-a-vis} the same equations taken isolated. We investigate the occurrence of a similar phenomenon in the case of \eqref{eq_main}. In fact, we prove that a solution $(u,m)$ is such that $D^2u$ has better integrability than in the case where $u$ solves $F(D^2u)=\mu$ with $\mu\in L^1(B_1)$ taken arbitrarily. This findings are reported in Proposition \ref{prop_gains}. For values of $p\geq 2$ in a suitable range, this result yields a counterpart in H\"older spaces.

Although substantial advances have been produced by the profession, fully nonlinear formulations are yet to be addressed in the literature. In fact, in \cite{gomes_ferreira} the authors put forward a well-posedness analysis based on monotonicity properties and the Minty-Browder machinery; see \cite[Chapter 5]{evans_cbms}. Under certain regularity assumptions on $F$, together with monotonicity conditions, they claim that solutions to \eqref{eq_main} would be available; see \cite[Section 7.1]{gomes_ferreira}.

%
%

Our findings advance the MFG theory by establishing the existence of weak solutions to \eqref{eq_main} with no regularity assumptions on $F$. Indeed, we work under ellipticity and ellipticity-like conditions on the operator governing the MFG system. This is the content of Theorem \ref{teo_eulerlagrange}.

Since the analysis of \eqref{eq_main} interweaves with properties of \eqref{eq_mainvar}, we proceed with some context on the latter. An important aspect concerning the functional in \eqref{eq_mainvar} is its dependence on the Hessian of the argument function $u$. The most elementary example is the case $F(M):=\tr(M)$, which leads to the functional governing the equilibrium of thin plates
\begin{equation}\label{eq_plates}
	I_\Delta[u]\,:=\,\int_{B_1}\left|\tr(D^2u)\right|^2\,\bd x;
\end{equation}
the Euler-Lagrange equation associated with this functional is the plate equation
\[
	\Delta\Delta u\,=\,0\;\;\;\;\;\mbox{in}\;\;\;\;\;B_1,
\]
also known as biharmonic operator.

Functionals of the form \eqref{eq_plates} are relevant in the context of conformally invariant energies. In fact, \eqref{eq_plates} is conformally invariant in dimension four. This fact suggests the development of a regularity program for biharmonic mappings in line with the theory available in the harmonic setting. In \cite{achang1} the authors develop this theory for biharmonic mappings from (domains in) $\mathbb{R}^d$ into $m$-dimensional spheres $\mathbb{S}^m$. See also \cite{achang2}.

More general classes of Hessian-dependent functionals can be found in various contexts. First, they represent an important strategy in by-passing the lack of convexity in minimization problems. Consider, for example, the non-convex functional
\begin{equation}\label{eq_nonconvexfuncintreg}
	J[u]\,:=\,\int_{B_1}\left(|Du|^2\,-\,1\right)^2\bd x.
\end{equation}
An alternative to regularize $J$ is to consider 
\begin{equation}\label{eq_nonconvexfuncint}
	J_\varepsilon[u]\,:=\,\int_{B_1}\left(|Du|^2\,-\,1\right)^2\,+\,\varepsilon^2|D^2u|^2\bd x;
\end{equation}
since $J_\varepsilon$ is convex with respect to the terms of higher order, it is possible to investigate the existence of a minimizer $u_\varepsilon$. Ideally, information on \eqref{eq_nonconvexfuncintreg} would be recovered through \eqref{eq_nonconvexfuncint}, by taking the singular limit $\varepsilon\to 0$. In some cases, such a limit entails further complexities; these are known as \emph{microstructures}. We mention that $J_\varepsilon$ is referred to as Aviles-Giga functional; see \cite{avilesgiga1} and \cite{avilesgiga2}; see also \cite{kohnicm}.

Functionals depending on the Hessian of a given function also appear in the context of energy-driven pattern formation and nonlinear elasticity, in the study of the mechanics of solids. One example regards the study of wrinkles appearing in a twisted ribbon \cite{kohn2}. In this setting, the energy functional depends on the thickness $h$ of the ribbon, which is regarded as a parameter as in
\[
	\int_{B_1}|M(u,v)|^2\,+\,h^2|B(u,v)|^2\bd x,
\]
where $M$ and $B$ are symmetric tensors accounting for the stretching and bending energies of the system, respectively. Although $M$ depends on $u$ and $v$ only through lower order terms, $B$ depends on $v$ through its Hessian, namely
\[
	B(u,v)\,\sim\,\|D^2u\|\,+\,C.
\]
The case of interest is the limit $h\to 0$. A further instance where higher order functionals appear in the context of solid mechanics concerns the formation of blister patterns in thin films on compliant substrates \cite{kohn3}. As before, the functional depends on lower order terms and a small (convex) perturbation driven by the Hessian of the minimizers. 

In this literature, it is relevant to obtain \emph{matching} upper and lower bounds for the functional. It means that both upper and lower bounds scale accordingly with respect to small parameters. In the case of \cite{kohn2}, for instance, the small parameter is the thickness of the ribbon, $h$. For recent developments in this literature we refer the reader to \cite{venkataramani}, \cite{maggi1}, \cite{maggi2} and the references therein.

When examining \eqref{eq_mainvar}, our focus is two-fold. Firstly, we work under the convexity of $F$ and prove the existence of a minimizer, Then we notice that such critical point is indeed a weak (distributional) solution for the associated Euler-Lagrange equation. This fact produces the existence of solutions to \eqref{eq_main}.

Our second approach to \eqref{eq_mainvar} drops the convexity of the operator. Here, a relaxation argument meets an ellipticity pass-through mechanism for the convex envelope of $F$. This mechanism closely relates to coercivity. As a consequence, we are able to characterize the minimum of the energy governed by the fully nonlinear operator.

To complete this work, we study a class of unidimensional problems admitting explicit solutions. Our endeavours here are inspired by the analysis in \cite{dgomesuni}. With this respect, we unveil interesting aspects of the problem. For example, we notice that minimizing solutions pair affine function with uniform distributions, regardless of the growth regime of the functional; see Theorem \ref{teo_unidimensional}.

The remainder of this paper is organized as follows: Section \ref{subsec_ma} puts forward our main assumptions, whereas Section \ref{subsec_pnr} gathers a few definitions and preliminary results. In Section \ref{sec_gofreg} we prove Proposition \ref{prop_gains} describing the improved regularity of the solutions to \eqref{eq_main}. The existence of minimizers for \eqref{eq_mainvar} and the well-posedness for \eqref{eq_main} is the subject of Section \ref{sec_convex}. In Section \ref{sec_relaxation} we drop the convexity assumption on $F$ and resort to relaxation arguments to characterize the minimizers of \eqref{eq_mainvar}. The analysis of explicit solution appears in Section \ref{sec_unid} and closes the paper.

\bigskip

\noindent{\bf Acknowledgements} The authors are grateful to Marco Cirant, Alessandro Goffi and Diogo A. Gomes for their interest and insightful comments on the material in this paper. PA is partially funded by CAPES-Brazil and FAPERJ-Brazil. EP is partially funded by CNPq-Brazil (Grants \# 433623/2018-7 and \# 307500/2017-9), FAPERJ-Brazil (Grant \# E26/200.002/2018), ICTP-Trieste and Instituto Serrapilheira (Grant \# 1811-25904). This study was financed in part by the Coordena\c{c}\~ao de Aperfei\c{c}oamento de Pessoal de N\'ivel Superior - Brazil (CAPES) - Finance Code 001.

\section{Main assumptions and preliminary results}\label{sec_mapr}

Our findings depend on (natural) conditions imposed on the elliptic operator $F$, as well as on a few auxiliary results. This section puts forward an account of our main assumptions and gathers former results used in the paper.

\subsection{Main assumptions}\label{subsec_ma}

We start by imposing an ellipticity condition on the operator $F$. In what follows, $\mathcal{S}(d)$ denotes the space of real symmetric matrices; it will be identified with $\mathbb{R}^\frac{d(d+1)}{2}$, whenever convenient. 

\begin{Assumption}[$(\lambda,\Lambda)$-uniform ellipticity]\label{assump_F}
We suppose the operator $F:\mathcal{S}(d)\to\mathbb{R}$ to be $(\lambda,\Lambda)$-uniformly elliptic. That is, there exist $0<\lambda\leq\Lambda$ such that
\begin{equation}\label{eq_ellipticity}
	\lambda\left\|N\right\|\,\leq\,F(M\,+\,N)\,-\,F(M)\,\leq\,\Lambda\left\|N\right\|,
\end{equation}
for every $M,\,N\in\mathcal{S}(d)$ with $N\geq 0$. In addition, we suppose $F(0)=0$.
\end{Assumption}
By taking $M\equiv 0$ in A\ref{assump_F}, it follows that
\[
	\lambda\left\|N\right\|\,\leq\,F(N)\,\leq\,\Lambda\left\|N\right\|,
\]
for every $N\geq 0$. Therefore, uniform ellipticity implies that $F$ satisfies a coercivity condition over non-negative matrices. 

\begin{Assumption}[Convexity of the operator $F$]\label{assump_Fconvex}
We suppose the operator $F=F(M)$ to be convex with respect to $M$.
\end{Assumption}

In certain cases, we must impose further conditions on the growth regime of the operator $F$. Namely, extend \eqref{eq_ellipticity} to symmetric matrices $N\in\mathcal{S}(d)$. This is the content of the next assumption.

\begin{Assumption}\label{assump_Fmdqelliptic}
We suppose that $F$ satisfies
\[
	\lambda\left\|M\right\|\,\leq\,F(M)\,\leq\,\Lambda\left\|N\right\|,
\]
for every $M\in\mathcal{S}(d)$ and $F(0)=0$.
\end{Assumption}
Note that A\ref{assump_Fmdqelliptic} includes A\ref{assump_F}. Consequential on A\ref{assump_Fmdqelliptic} is a coercivity condition for $F$ in the entire space $\mathcal{S}(d)$. To compare A\ref{assump_F} and A\ref{assump_Fmdqelliptic} amounts to observe a change in the ellipticity cone of the operator. In fact, A\ref{assump_Fmdqelliptic} confines the image of the operator to a $(\lambda,\Lambda)$-cone contained in the upper quadrants of $\mathbb{R}^{d^2+1}$. See Figure \ref{fig_cone}.

 \bigskip

\begin{figure}[h!]
\centering

\psscalebox{.4 .4} 
{
\begin{pspicture}(0,-6.3)(20.75,6.3)
\definecolor{colour0}{rgb}{0.8,0.8,0.8}
\pspolygon[linecolor=colour0, linewidth=0.04, fillstyle=solid,fillcolor=colour0](2.4,-5.5)(18.4,-0.7)(12.4,5.3)
\psline[linecolor=black, linewidth=0.04, arrowsize=0.05291667cm 2.0,arrowlength=1.4,arrowinset=0.0]{->}(2.4,-5.9)(2.4,5.7)
\psline[linecolor=black, linewidth=0.04, arrowsize=0.05291667cm 2.0,arrowlength=1.4,arrowinset=0.0]{->}(2.0,-5.5)(19.2,-5.5)
\psline[linecolor=colour0, linewidth=0.04](2.4,-5.5)(19.6,-0.3)(19.6,-0.3)
\psline[linecolor=colour0, linewidth=0.04](2.4,-5.5)(12.8,5.7)
\rput[bl](13.2,5.7){\Huge{$\Lambda$-line}}
\rput[bl](18.8,-1.5){\Huge{$\lambda$-line}}
\rput[bl](18.0,-6.3){\Huge{$M$}}
\rput[bl](0.0,4.5){\Huge{$F(M)$}}
\psbezier[linecolor=black, linewidth=0.04](2.4,-5.5)(2.4,-5.5)(4.305573,-3.9472136)(5.2,-3.5)(6.094427,-3.0527864)(5.851317,-3.4162278)(6.8,-3.1)(7.7486835,-2.7837722)(7.4636707,-2.2511234)(8.4,-1.9)(9.336329,-1.5488765)(9.463671,-2.2511234)(10.4,-1.9)(11.336329,-1.5488765)(11.6,-0.7)(11.6,-0.7)(11.6,-0.7)(12.4,0.5)(13.2,0.9)(14.0,1.3)(14.8,0.5)(16.4,0.1)
\rput[bl](15.6,-0.7){\Huge{$F_2$}}
\psbezier[linecolor=black, linewidth=0.04](2.4,-5.5)(2.4,-5.5)(13.2,-0.3)(13.2,4.1)
\rput[bl](12.0,3.3){\Huge{$F_1$}}
\end{pspicture}
}
\caption{{\bf Assumption A\ref{assump_Fmdqelliptic} -- }The $(\lambda,\Lambda)$-ellipticity condition, as in Assumption A\ref{assump_F}, confines the image of the operator to the area between the lines $\lambda\|\,\cdot\,\|$ and $\Lambda\|\,\cdot\,\|$. However, A\ref{assump_Fmdqelliptic} confines the image of the operator to the region between those lines \emph{within the upper hemisphere} of $\mathcal{S}(d)\times\mathbb{R}$. Here we can find two examples of operators satisfying A\ref{assump_Fmdqelliptic}; a convex operator, $F_1$, and a nonconvex one, $F_2$.}
\label{fig_cone}
\end{figure}
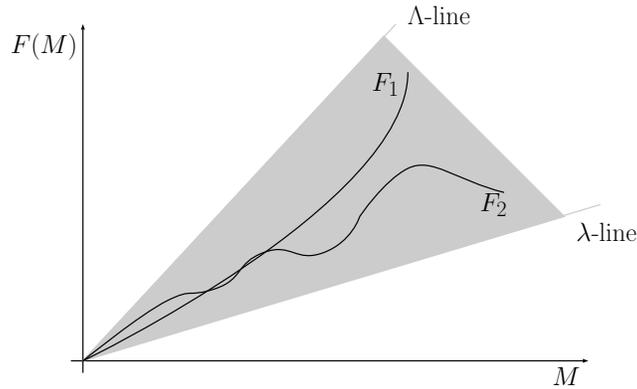

\bigskip

In the next section we collect a number of definitions and auxiliary results used throughout the paper. 

\subsection{Preliminary notions and results}\label{subsec_pnr}

When equipping \eqref{eq_main} with boundary conditions in the sense of Sobolev, we make use of an affine subspace of $W^{2,p}(B_1)$. 

\begin{Definition}[Sobolev spaces $W^{2,p}_g(B_1)$]\label{def_w2pg} 
Given $g\in W^{2,p}(B_1)$, we say that $u\in W^{2,p}_g(B_1)$ if $u-g\in W^{2,p}_0(B_1)$.
\end{Definition}
Once this definition is available, to prescribe $u=g$ on $\partial B_1$ in the Sobolev sense means to consider $u\in W^{2,p}_g(B_1)$. Instead of satisfying the boundary condition in the classical sense, we only require the difference between $u$ and the data to be an element of $W^{2,p}_0(B_1)$.

To properly explore the connection between \eqref{eq_mainvar} and \eqref{eq_main}, we introduce the definition of \emph{solution to the mean-field games system} in \eqref{eq_main}.

\begin{Definition}[Weak solution]A pair $(u,m)$ is said to be a weak solution to \eqref{eq_main} if the following conditions are satisfied:
\begin{enumerate}
	\item $u\in \mathcal{C}(B_1)$ and $m\in L^1(B_1)$ satisfies $m\geq 0$ in $B_1$;
	\item $u$ is a viscosity solution to 
	\[
		F(D^2u)\,=\,m^\frac{1}{p-1}\;\;\;\;\;\mbox{in}\;\;\;\;\;B_1;
	\]
	\item $m$ satisfies
	\[
		\int_{B_1}\left(F_{ij}(D^2u)\,m\right)\phi_{x_ix_j}\,\bd x\,=\,0,
	\]
	for every $\phi\in\mathcal{C}^\infty_c(B_1)$.
\end{enumerate}
\end{Definition}

In general, a solution to \eqref{eq_main} is a critical point for the functional \eqref{eq_mainvar}. Nevertheless, up to this point, we have not enough information to ensure that solutions to the MFG system are indeed minimizers of the functional $I[u]$. To distinguish solutions minimizing this functional, we introduce a definition.

\begin{Definition}[Minimizing solutions]\label{def_minimizingsolutions}
We say that $(u^*,m^*)$ is a minimizing solution if it solves \eqref{eq_main} and satisfies
\[
	I[u^*]\,\leq\,I[u]
\]
for every $u\in W^{2,p}_g(B_1)$.
\end{Definition}

As concerns the study of relaxation methods, the convex envelope of $F$ plays a central role. In what follows, we define this object.

\begin{Definition}[Convex envelope]\label{def_convexenv}
Let $F:\mathcal{S}(d)\to\mathbb{R}$ be a fully nonlinear operator satisfying A\ref{assump_F}. The convex envelope of $F$ is the operator $\Gamma_F:\mathcal{S}(d)\to\mathbb{R}$ defined by
\[
	\Gamma_F(M)\,:=\,\sup\left\lbrace G(M)\,|\,G\,\leq\,F\;\;\mbox{and}\;\;G\;\mbox{is convex}\right\rbrace
\]
\end{Definition}

We are interested in gains of regularity produced by the MFG coupling \textit{vis-a-vis} the equations in \eqref{eq_main} taken in isolation. To pursue this direction, we rely on the gains of integrability for the double-divergence equation.

\begin{Lemma}[Gains of integrability for the double-divergence equation]\label{lem_fabesstroock}
Let $m\in L^1(B_1)$ be a non-negative weak solution to the second equation in \eqref{eq_main}. Suppose A\ref{assump_F} is in force. Then, $m\in L^\frac{d}{d-1}_{loc}(B_1)$ and there exists a constant $C>0$ such that
\[
	\left\|m\right\|_{L^\frac{d}{d-1}(B_{1/2})}\,\leq\,C.
\]
\end{Lemma}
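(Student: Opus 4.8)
The plan is to recognize the second equation in \eqref{eq_main} as an elliptic equation in double-divergence (adjoint) form, $\partial_{x_ix_j}(a_{ij}\,m)=0$, with coefficient field $a_{ij}:=F_{ij}(D^2u)$. Assumption A\ref{assump_F} guarantees that $(a_{ij})$ is a bounded, measurable, $(\lambda,\Lambda)$-uniformly elliptic matrix field, so that $m$ is exactly a non-negative adjoint solution of the non-divergence operator $L:=a_{ij}\partial_{x_ix_j}$ with merely measurable coefficients. In this setting the asserted gain of integrability is the classical Fabes--Stroock integrability theory for fundamental solutions and adjoint solutions, and I would either invoke it directly or reproduce its mechanism. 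Concretely, I would reduce the statement to the reverse-H\"older inequality for normalized adjoint solutions,
\[
	\left(\frac{1}{|B_r(x_0)|}\int_{B_r(x_0)} m^{\frac{d}{d-1}}\,\bd x\right)^{\frac{d-1}{d}}\,\leq\,\frac{C}{|B_{2r}(x_0)|}\int_{B_{2r}(x_0)} m\,\bd x,\qquad B_{2r}(x_0)\subset B_1,
\]
and then obtain the bound on $B_{1/2}$ by covering it with finitely many such balls and summing, the right-hand side being controlled by $\|m\|_{L^1(B_1)}$.

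The engine behind this estimate is a duality argument anchored on the Aleksandrov--Bakelman--Pucci (ABP) maximum principle. By the $L^{d/(d-1)}$--$L^{d}$ duality it suffices to bound $\int m\,g$ uniformly over $g$ with $\|g\|_{L^d}\leq 1$. For each such $g$ I would solve the Dirichlet problem $Lw=g$ in a slightly larger concentric ball with zero boundary data -- in the $L^d$-viscosity sense, since the coefficients are only measurable -- and invoke ABP to obtain $\|w\|_{L^\infty}\leq C\|g\|_{L^d}\leq C$. Localizing $w$ by a cutoff $\eta$ produces a test function $\phi=\eta w$ (admissible after a standard smoothing), and inserting it into the weak formulation $\int_{B_1} a_{ij}\,m\,\phi_{x_ix_j}\,\bd x=0$ reproduces $\int m\,g$ on the region where $\eta\equiv 1$, leaving only commutator terms supported on an annulus. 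It is here that the $L^1$--$L^\infty$ pairing dualizes the ABP bound into the desired $L^{d/(d-1)}$ control of $m$, with the dependence on $\|m\|_{L^1(B_1)}$ entering through the annular remainder.

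The main obstacle is precisely the roughness of the coefficients. Since $a_{ij}$ is only bounded and measurable, there is no Calder\'on--Zygmund $W^{2,d}$ theory to lean on, so the whole argument must be run through ABP and $L^d$-viscosity solutions. More seriously, gradient estimates are unavailable for non-divergence operators with measurable coefficients, so the first-order commutator term $\int a_{ij}\,m\,\eta_{x_i}\,w_{x_j}$ generated by the localization cannot be absorbed by a naive pointwise bound paired against the $L^1$ density $m$; controlling this term is the heart of the matter, and it is exactly what the Fabes--Stroock fundamental-solution (Green's function) estimates are designed to handle, via averaged bounds for $w$ on the annulus where $Lw=0$. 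I would therefore either carry out this refined step or, more economically, cite the Fabes--Stroock theorem verbatim. In either case the resulting constant depends only on $d$, $\lambda$, $\Lambda$ and $\|m\|_{L^1(B_1)}$, which yields the stated uniform bound on $B_{1/2}$.
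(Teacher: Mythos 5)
Your proposal is correct and takes essentially the same route as the paper: the paper offers no proof of this lemma, instead citing Fabes--Stroock (with \cite{bkrsbook} as a secondary reference), which is precisely the ``economical'' option you identify. Your sketch of the underlying mechanism---duality against $L^d$ right-hand sides, ABP bounds for the non-divergence operator with merely measurable $(\lambda,\Lambda)$-elliptic coefficients $a_{ij}=F_{ij}(D^2u)$, and the reverse-H\"older inequality for non-negative adjoint solutions---is an accurate account of that cited proof.
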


For a proof of Lemma \ref{lem_fabesstroock} we refer the reader to \cite{fabesstroock}. See also \cite{bkrsbook} and the references therein. The importance of this result lies in the fact that an integrable solution to the double-divergence equation has higher integrability, depending explicitly on the dimension $d$.

Because our arguments are heavily based on the direct methods in the calculus of variations, part of our arguments are related to the weakly lower semi-continuity of the functional $I[u]$. An important ingredient in the study of this property is the following lemma.

\begin{Lemma}[Mazur Theorem]\label{lem_mazur}
Let $X$ be a linear space and $\ell:X\to\mathbb{R}^+$ be a norm defined on $X$. If $(x_n)_{n\in\mathbb{N}}\subset X$ is such that
\[
	x_n\,\rightharpoonup\,x\;\;\;\;\;\;\mbox{in}\;\;\;\;\;X,
\]
there exists a sequence $(y_m)_{m\in\mathbb{N}}\subset\co (x_n)_{n\in\mathbb{N}}$ such that:
\begin{enumerate}
\item for every $m\in\mathbb{N}$ there exists $M\in\mathbb{N}$ and $(\alpha_m^n)_{n=1}^M$ with 
\[
	\alpha_m^n\,>\,0\;\;\;\;\;\;\;\;\;\;\sum_{n=1}^M\alpha_m^n\,=\,1
\]
and
\[
	y_m\,=\,\sum_{n=1}^M\alpha_m^nx_n;
\]
\item in addition, we have
\[
	\ell\left(y_m\,-\,x\right)\,\longrightarrow \,0\;\;\;\;\;\;\;\;\;\;\mbox{as}\;\;\;\;\;\;\;\;\;\;m\to\infty.
\]
\end{enumerate}
\end{Lemma}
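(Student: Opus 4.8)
The plan is to deduce the statement from the Hahn--Banach separation theorem, exploiting the classical principle that a norm-closed convex set in a normed space is weakly closed. First I would form the convex hull of the sequence,
\[
	K\,:=\,\co\,(x_n)_{n\in\mathbb{N}},
\]
and consider its closure $\overline{K}$ with respect to the norm $\ell$. Since $K$ is convex, so is $\overline{K}$; moreover, $\overline{K}$ is $\ell$-closed by construction. The pivotal step is to argue that $\overline{K}$ is in fact \emph{weakly} closed.

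To establish this, I would invoke the separation theorem. Suppose $z\notin\overline{K}$; because $\overline{K}$ is closed and convex while $\{z\}$ is compact and convex, Hahn--Banach provides a continuous linear functional $f$ on $X$ together with a constant $\alpha\in\mathbb{R}$ such that
\[
	f(z)\,<\,\alpha\,\leq\,f(w)\qquad\mbox{for every}\;\;w\in\overline{K}.
\]
The half-space $\left\lbrace v\in X\,:\,f(v)<\alpha\right\rbrace$ is then a weak neighborhood of $z$ disjoint from $\overline{K}$, so the complement of $\overline{K}$ is weakly open; hence $\overline{K}$ is weakly closed.

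With weak closedness available, the conclusion follows at once. Each $x_n$ lies in $K\subset\overline{K}$, and by hypothesis $x_n\rightharpoonup x$; since $\overline{K}$ is weakly closed, the weak limit satisfies $x\in\overline{K}$. By the definition of norm closure, there exists a sequence $(y_m)_{m\in\mathbb{N}}\subset K=\co\,(x_n)_{n\in\mathbb{N}}$ with $\ell(y_m-x)\to 0$ as $m\to\infty$. Each $y_m$, being an element of the convex hull, is a finite convex combination $y_m=\sum_{n=1}^M\alpha_m^n x_n$ with $\alpha_m^n>0$ and $\sum_{n=1}^M\alpha_m^n=1$, which is precisely item (1); the strong convergence $\ell(y_m-x)\to 0$ is exactly item (2).

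I expect the main obstacle to be the separation step itself: one must guarantee that the ambient structure is locally convex, so that Hahn--Banach applies and strict separation of the point $z$ from the closed convex set $\overline{K}$ is indeed available. In the normed setting of the lemma this holds automatically, yet it is the substantive analytical input of the argument — everything else reduces to bookkeeping on the convex combinations.
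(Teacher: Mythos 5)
Your argument is correct and is essentially the same proof the paper points to: the paper does not prove this lemma itself but cites Yosida, where the result is obtained exactly by the Hahn--Banach separation principle you use (a norm-closed convex set is weakly closed, so the weak limit $x$ lies in the $\ell$-closure of $\co\,(x_n)_{n\in\mathbb{N}}$ and is therefore a strong limit of finite convex combinations). The only cosmetic point is that a generic element of the convex hull has nonnegative coefficients, so to meet the strict positivity $\alpha_m^n>0$ in item (1) you should discard the vanishing coefficients (and, if one insists on the indexing $\sum_{n=1}^M$, reorder or pad accordingly); this is pure bookkeeping and does not affect the argument.
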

For a proof of this result, we refer the reader to \cite[p. 120, Theorem 2]{yosidafa}. See also \cite{rudinfa} and \cite{dacorognadm}. In the context of functions $u\in W^{2,p}_g(B_1)$, we make use of a Poincar\'e's inequality depending intrinsically on $g$. This is the content of the next lemma.

\begin{Lemma}[Poincar\'e inequality]\label{lem_poincare}
Let $u\in W^{1,p}_g(B_1)$ and $C_p>0$ be the Poincar\'e's constant associated with $L^p(B_1)$. Then, for every $C<C_p$ there exists $C_1(C,C_p)>0$ and $C_2\geq0$ such that
\[
	\int_{B_1}|Du|^p\bd x\,-\,C\int_{B_1}|u|^p\bd x\,+\,C_2\,\geq\, C_1\left(\int_{B_1}|Du|^p\bd x\,+\,\int_{B_1}|u|^p\bd x\right). 
\]
\end{Lemma}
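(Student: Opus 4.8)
The plan is to reduce the statement to the classical Poincar\'e inequality on $W^{1,p}_0(B_1)$ by writing $u=g+v$ with $v:=u-g\in W^{1,p}_0(B_1)$, and then to carefully track the multiplicative and additive constants produced by the affine shift $g$. Throughout I read $C_p$ as the \emph{coercivity} constant in Poincar\'e's inequality, namely
\[
	\int_{B_1}|Dv|^p\,\bd x\,\geq\,C_p\int_{B_1}|v|^p\,\bd x\qquad\text{for all }v\in W^{1,p}_0(B_1),
\]
so that the hypothesis $C<C_p$ leaves a strictly positive gap $C_p-C$ to exploit.

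First I would establish an upper bound of the form $\int_{B_1}|u|^p\,\bd x\le A(\ep)\int_{B_1}|Du|^p\,\bd x+B(\ep,g)$, where $A(\ep)\to 1/C_p$ as $\ep\to 0$ and $B(\ep,g)\geq 0$ collects finite quantities depending only on $\|g\|_{W^{1,p}(B_1)}$ and $\ep$. To do so I apply the elementary inequality $|a+b|^p\le(1+\ep)|a|^p+C(\ep)|b|^p$ twice: once to $Dv=Du-Dg$, giving $\int|Dv|^p\le(1+\ep)\int|Du|^p+C(\ep)\int|Dg|^p$, and once to $u=v+g$, giving $\int|u|^p\le(1+\ep)\int|v|^p+C(\ep)\int|g|^p$. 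Feeding the Poincar\'e estimate $\int|v|^p\le C_p^{-1}\int|Dv|^p$ into these yields $A(\ep)=(1+\ep)^2/C_p$ and a constant $B(\ep,g)$ assembled from $\int|Dg|^p$ and $\int|g|^p$. Equivalently, this reads as the reverse bound $\int_{B_1}|Du|^p\,\bd x\ge A(\ep)^{-1}\int_{B_1}|u|^p\,\bd x-A(\ep)^{-1}B(\ep,g)$.

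The remaining step is pure algebra. The claimed inequality is equivalent to $(1-C_1)\int|Du|^p-(C+C_1)\int|u|^p+C_2\geq 0$. I would split $(1-C_1)\int|Du|^p=(1-C_1-\theta)\int|Du|^p+\theta\int|Du|^p$ for a parameter $\theta\in(0,1-C_1)$ and apply the reverse bound to the term $\theta\int|Du|^p$. This leaves the coefficient $1-C_1-\theta$ in front of $\int|Du|^p$, the coefficient $\theta/A(\ep)-C-C_1$ in front of $\int|u|^p$, and the choice $C_2:=\theta\,A(\ep)^{-1}B(\ep,g)\geq 0$. It then suffices to make both coefficients nonnegative. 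Running the continuity argument in the parameters, as $\ep\to 0$, $\theta\to 1$, and $C_1\to 0$ these coefficients tend to $C_p-C>0$ and $0^+$ respectively; since $C<C_p$ strictly, all parameters can be frozen at small positive values rendering both coefficients nonnegative, with $C_1>0$ ultimately depending only on $C$ and $C_p$.

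The main obstacle is the bookkeeping forced by the shift $g$: the two applications of the $(1+\ep)$-Young inequality must be arranged so that the multiplicative constant $A(\ep)$ can be pushed arbitrarily close to $1/C_p$ while $B(\ep,g)$ stays finite, for otherwise the strict inequality $C<C_p$ would be consumed by the error terms. Once this is verified, that $C_1$ depends only on $C,C_p$ (the $g$-dependence being absorbed entirely into the additive constant $C_2$) follows directly from the selection scheme, and the rest is elementary.
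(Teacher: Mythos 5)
Your argument is correct and follows exactly the route the paper indicates: reduce to the compactly supported Poincar\'e inequality via the shift $v:=u-g\in W^{1,p}_0(B_1)$ (the paper simply defers the bookkeeping to Dal Maso's Lemma 2.7, which is the same constant-tracking you carry out). Your reading of $C_p$ as the optimal constant in $\int_{B_1}|Dv|^p\,\bd x\geq C_p\int_{B_1}|v|^p\,\bd x$ is the one that makes the hypothesis $C<C_p$ meaningful, and the $(1+\ep)$-Young splitting plus the final parameter selection correctly yields $C_1=C_1(C,C_p)>0$ with the $g$-dependence absorbed into $C_2$.
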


For the proof of Lemma \ref{lem_poincare}, it suffices to apply the compactly supported version of the Poincar\'e's Inequality to the function $\tilde{u}:=u-g$. For the detailed argument, we refer the reader to \cite[Lemma 2.7, p. 22]{dalmaso}.  In the next section we explore the implications of the MFG coupling to the regularity of the solutions to \eqref{eq_main}.

\section{Gains of regularity through MFG couplings}\label{sec_gofreg}

In this section we explore the gains of regularity yielded by the coupling \eqref{eq_main}. This analysis is motivated by findings in the literature of mean-field games regarding the smoothness of solutions to MFG systems. In that context, the work-horse of the theory is the coupling of a Hamilton-Jacobi with a Fokker-Planck equation. 

When taken in isolation, those equations are solvable in the weak sense in regularity classes strictly below those required by classical solutions, However, in the presence of a suitable MFG coupling, smooth solutions are available. Motivated by the toy-models studied in the literature, in what follows we investigate gains of regularity for the solutions to fully nonlinear mean-field games. We focus on estimates in Sobolev spaces. 

Let $u\in \mathcal{C}(B_1)$ be a viscosity solution of
\[
	F(D^2u)\,=\,\mu^\frac{1}{p\,-\,1} \;\;\;\;\;\mbox{in}\;\;\;\;\;B_1,
\]
where $F$ satisfies A\ref{assump_F}-A\ref{assump_Fconvex}, $\mu\in L^1(B_1)$ is a non-negative function and $p-1\geq d$. It follows that $u\in W_{loc}^{2,p-1}(B_1)$, with  appropriate estimates. See \cite{caffarelli89}, \cite{escauriaza93} and \cite[Chapter 7]{ccbook}. We also refer the reader to \cite{pimtei}, where the convexity assumption on the operator is slightly weakened. Here we are interested in the case where the pair $(u,\mu)$ solves \eqref{eq_main}.

\begin{Proposition}[Improved regularity in Sobolev spaces]\label{prop_gains}
Let $(u,\mu)$ be a weak solution to \eqref{eq_main}. Suppose A\ref{assump_F}-A\ref{assump_Fconvex} are in force. Then, $u\in W^{2,\frac{d(p-1)}{d-1}}_{loc}(B_1)$. In addition, there exists $C>0$ such that 
\[
	\left\|u\right\|_{W^{2,\frac{d(p-1)}{d-1}}(B_{1/2})}\,\leq\,C\left(\left\|u\right\|_{L^\infty(B_1)}\,+\,\left\|\mu\right\|_{L^1(B_1)}^\frac{1}{p-1}\right).
\]
\end{Proposition}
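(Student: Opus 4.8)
The plan is to combine the gain of integrability for the density $\mu$ furnished by the adjoint (double-divergence) equation with the Calder\'on--Zygmund ($W^{2,q}$) theory for convex uniformly elliptic operators, the target exponent being dictated by the coupling exponent $1/(p-1)$. First I would invoke Lemma \ref{lem_fabesstroock}: since $\mu\in L^1(B_1)$ is a non-negative weak solution to the second equation in \eqref{eq_main}, the lemma gives $\mu\in L^{d/(d-1)}_{loc}(B_1)$. Applying it on an intermediate ball $B_{3/4}$, and using that the double-divergence equation is homogeneous of degree one in $\mu$, the bound scales as $\|\mu\|_{L^{d/(d-1)}(B_{3/4})}\le C\|\mu\|_{L^1(B_1)}$, which is what will ultimately produce the right-hand side of the claimed estimate.

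Next I would read off the integrability of the source term. Setting $f:=\mu^{1/(p-1)}$, the first equation in \eqref{eq_main} reads $F(D^2u)=f$, and a direct computation gives
\[
	\int_{B_{3/4}}|f|^{\frac{d(p-1)}{d-1}}\,\bd x\,=\,\int_{B_{3/4}}\mu^{\frac{d}{d-1}}\,\bd x,
\]
so that $f\in L^{\frac{d(p-1)}{d-1}}(B_{3/4})$ with $\|f\|_{L^{d(p-1)/(d-1)}(B_{3/4})}=\|\mu\|_{L^{d/(d-1)}(B_{3/4})}^{1/(p-1)}\le C\|\mu\|_{L^1(B_1)}^{1/(p-1)}$. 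This is precisely where the improvement lives: the MFG coupling upgrades the a priori integrability of the source from $L^{p-1}$ (all one gets from $\mu\in L^1$ taken in isolation) to $L^{\frac{d(p-1)}{d-1}}$, a strictly better exponent since $d/(d-1)>1$.

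Finally I would apply the $W^{2,q}$ estimates for convex $(\lambda,\Lambda)$-elliptic operators (Caffarelli--Escauriaza, as cited before the statement) with $q=\frac{d(p-1)}{d-1}$. Applied on $B_{1/2}$ with data on $B_{3/4}$, these yield $u\in W^{2,q}_{loc}(B_1)$ together with
\[
	\|u\|_{W^{2,q}(B_{1/2})}\,\le\,C\left(\|u\|_{L^\infty(B_{3/4})}+\|f\|_{L^{q}(B_{3/4})}\right)\,\le\,C\left(\|u\|_{L^\infty(B_1)}+\|\mu\|_{L^1(B_1)}^{\frac{1}{p-1}}\right),
\]
which is the desired conclusion after a standard covering argument to reconcile the nested balls.

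The main obstacle I anticipate is the applicability of the fully nonlinear Calder\'on--Zygmund theory at the exponent $q=\frac{d(p-1)}{d-1}$: the estimate requires $q$ to lie above the Escauriaza threshold $p_0(d,\lambda,\Lambda)<d$, and it is exactly here that A\ref{assump_Fconvex} (convexity of $F$) is used in an essential way. One must therefore verify that $\frac{d(p-1)}{d-1}$ falls within the admissible range; this is automatic once $p\ge d$ (then $q\ge d$), and it is precisely the \emph{suitable range} in which the Sobolev conclusion further upgrades to a H\"older one by Morrey's embedding. Apart from this point, the remaining work is the routine bookkeeping of constants and the localization needed to chain the scale $B_{3/4}$ of Lemma \ref{lem_fabesstroock} with the scales entering the $W^{2,q}$ estimate.
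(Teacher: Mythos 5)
Your argument is the same as the paper's: invoke Lemma \ref{lem_fabesstroock} to upgrade $\mu$ to $L^{d/(d-1)}_{loc}$, observe that $\mu^{1/(p-1)}$ then lies in $L^{d(p-1)/(d-1)}$, and conclude via the Caffarelli--Escauriaza $W^{2,q}$ theory for convex $(\lambda,\Lambda)$-elliptic operators. Your additional remarks on localization and on checking that $q=\tfrac{d(p-1)}{d-1}$ exceeds the Escauriaza threshold are sound and consistent with the standing assumption $p-1\geq d$ stated just before the proposition.
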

\begin{proof}
The result follows by combining standard arguments in $W^{2,p}$-regularity theory with Lemma \ref{lem_fabesstroock}. Indeed, under A\ref{assump_F}-A\ref{assump_Fconvex}, it remains to verify that the right-hand side of the first equation in \eqref{eq_main} is bounded in $L^\frac{d(p-1)}{d-1}(B_1)$. 

Because of Lemma \ref{lem_fabesstroock}, we have $\mu\in L^\frac{d}{d-1}(B_1)$. Therefore, there exists $C>0$ such that
\[
	\int_{B_1}\left|\mu^\frac{1}{p-1}\right|^{\frac{d(p-1)}{d-1}}\,\bd x\,\leq\,C
\]
and the result follows.
\end{proof}

\begin{figure}[h!]
\centering

\psscalebox{.4 .4} 
{

\begin{pspicture}(0,-5.9534793)(19.906958,5.9534793)
\definecolor{colour0}{rgb}{0.8,0.8,0.8}
\psline[linecolor=black, linewidth=0.04, arrowsize=0.05291667cm 2.0,arrowlength=1.4,arrowinset=0.0]{->}(3.2,-5.553479)(3.2,6.0465207)
\psline[linecolor=black, linewidth=0.04, arrowsize=0.05291667cm 2.0,arrowlength=1.4,arrowinset=0.0]{->}(2.8,-5.153479)(20.0,-5.153479)
\rput[bl](19.2,-5.9534793){\Huge{$d$}}
\rput[bl](0.0,3.2465208){\Huge{$2(p-1)$}}
\rput[bl](6.0,-5.9534793){\Huge{$2$}}
\psline[linecolor=colour0, linewidth=0.04, linestyle=dashed, dash=0.17638889cm 0.10583334cm](6.0,-5.153479)(6.0,5.246521)(6.0,5.246521)
\psline[linecolor=colour0, linewidth=0.04, linestyle=dashed, dash=0.17638889cm 0.10583334cm](3.2,-2.7534792)(19.2,-2.7534792)
\psline[linecolor=colour0, linewidth=0.04, linestyle=dashed, dash=0.17638889cm 0.10583334cm](3.2,3.6465209)(19.2,3.6465209)
\psbezier[linecolor=black, linewidth=0.04](6.0,3.6465209)(6.4,-0.7534792)(11.6,-1.9534792)(19.2,-2.3534791564941404)
\rput[bl](0.4,-3.153479){\Huge{$(p-1)$}}
\rput[bl](2.4,5.246521){\Huge{$q$}}
\psdots[linecolor=black, dotstyle=|, dotsize=0.2](6.0,-5.153479)
\end{pspicture}
}
\caption{Gains of regularity. When equipped with a non-negative right-hand side $\mu\in L^1(B_1)$, the equation $F=\mu^{1/(p-1)}$ has solutions in $W^{2,q}_{loc}(B_1)$, for $d-\varepsilon<q\leq p-1$. Nevertheless, if we consider a pair $(u,\mu)$, solutions to \eqref{eq_main}, the regularity of the solutions to $F=\mu^{1/(p-1)}$ improves; in that case, the range for $d-\varepsilon<q<d(p-1)/(d-1)$ increases by a dimension-dependent factor.}
\label{fig_gains}
\end{figure}
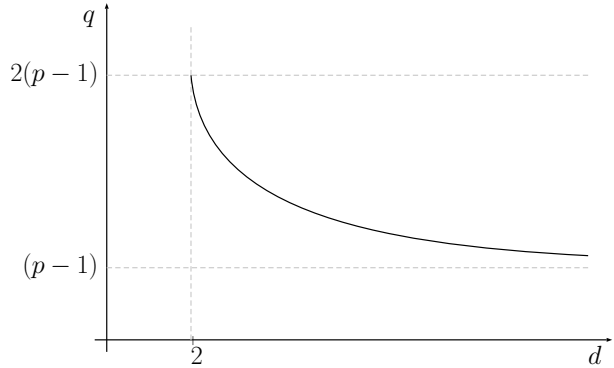

The conclusion of Proposition \ref{prop_gains} is that the MFG structure in \eqref{eq_main} entails improved regularity levels for $u$ in Sobolev spaces. A straightforward consequence of this fact is put forward in the next corollary.

\begin{Corollary}[Improved regularity in H\"older spaces]\label{cor_holder}
Let $(u,\mu)$ be a weak solution to \eqref{eq_main} and suppose A\ref{assump_F}-A\ref{assump_Fconvex} are in force. Suppose further that $p-1>d$. Then, $u\in \mathcal{C}^{1,\alpha^*}_{loc}(B_1)$, for
\[
	\alpha^*\,:=\,1\,-\,\frac{d\,-\,1}{p\,-\,1}.
\]
In addition, there exists $C>0$ such that 
\[
	\left\|u\right\|_{\mathcal{C}^{1,\alpha^*}(B_{1/2})}\,\leq\,C\left(\left\|u\right\|_{L^\infty(B_1)}\,+\,\left\|\mu\right\|_{L^1(B_1)}^\frac{1}{p-1}\right).
\]
\end{Corollary}
\begin{Remark}\normalfont
It is clear that the results in this section hold in the context of an operator $F=F(M,x)$ with variable coefficients, provided it satisfies an oscillation estimate with respect to its fixed-coefficients counterpart. We refer the reader to \cite{ccbook}. In addition, the convexity assumption on $F$ might be weakened. Indeed, we only require a limiting profile with $\mathcal{C}^{1,1}$-estimates to be available; such a limiting configuration can be given by the fixed-coefficients counterpart of $F(M,x)$ or its \emph{recession operator}. See \cite{pimtei} and \cite{silvtei}.
\end{Remark}

\begin{Remark}[Non-monotone couplings]\normalfont
We notice the gains of regularity discussed here would be available also under the non-monotone coupling term $z\mapsto -z^\frac{1}{p-1}$, instead of the monotone one in \eqref{eq_main}. 
\end{Remark}

In the next section we make use of variational techniques to investigate the existence of solutions to \eqref{eq_main}.

\section{Existence of solutions}\label{sec_convex}

Here we investigate the existence of minimizers to \eqref{eq_mainvar} as well as the existence of solutions to \eqref{eq_main}. We work both under A\ref{assump_F} and A\ref{assump_Fmdqelliptic}. In the context of the former, we prove the existence of minimizers for \eqref{eq_mainvar} in the class of convex functions $\mathcal{A}$ defined as
\[
	\mathcal{A}\,:=\,\left\lbrace u\,\in\,W^{2,p}_g(B_1),\;\; u\;\;\mbox{convex}\right\rbrace,
\]
where $g\in W^{2,p}(B_1)$ is convex. When working under A\ref{assump_Fmdqelliptic} we establish the existence of a minimizer for \eqref{eq_mainvar} in $W^{2,p}_g(B_1)$. In addition, we prove that such a minimizer is a solution to the associated Euler-Lagrange equation, which leads to the existence of a solution to \eqref{eq_main}.

\begin{Proposition}[Existence of minimizers]\label{teo_existconvex}
Suppose A\ref{assump_F}-A\ref{assump_Fconvex} are in force. Then, there exists $u^*\in\mathcal{A}$ such that 
\[
	I[u^*]\,\leq\,I[u]\;\;\;\;\;\;\;\;\mbox{for every}\;\;\;\;\;\;\;\;u\in\,\mathcal{A}.
\]
\end{Proposition}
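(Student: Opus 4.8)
The plan is to apply the direct method of the calculus of variations to the functional $I$ over the admissible class $\mathcal{A}$. First I would verify that $\mathcal{A}$ is nonempty (it contains $g$) and that $I$ is bounded below on $\mathcal{A}$; since $F(0)=0$ and, by Assumption A\ref{assump_F}, $\lambda\|N\|\leq F(N)$ for $N\geq 0$, convexity of each admissible $u$ gives $D^2u\geq 0$, so the integrand $[F(D^2u)]^p\geq \lambda^p\|D^2u\|^p\geq 0$ is coercive in the Hessian. Hence $\inf_{\mathcal{A}} I =: \ell \geq 0$ is finite, and I may fix a minimizing sequence $(u_k)\subset\mathcal{A}$ with $I[u_k]\to \ell$.

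Next I would establish a uniform bound on $(u_k)$ in $W^{2,p}(B_1)$. The coercivity just noted controls $\|D^2u_k\|_{L^p}$ uniformly. To control the lower-order norms I would use that $u_k-g\in W^{2,p}_0(B_1)$ together with the Poincaré-type inequality of Lemma \ref{lem_poincare} (applied to $D u_k$ and to $u_k$, or via iterated use on $u_k-g$), so that control of the $L^p$ norm of $D^2u_k$ yields control of $\|Du_k\|_{L^p}$ and $\|u_k\|_{L^p}$, and thus a uniform bound in $W^{2,p}(B_1)$. By reflexivity of $W^{2,p}$ for $p\geq 2$, I extract a subsequence (not relabeled) with $u_k\rightharpoonup u^*$ weakly in $W^{2,p}(B_1)$; in particular $D^2u_k\rightharpoonup D^2u^*$ weakly in $L^p$.

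The two remaining points are that the weak limit $u^*$ lies in $\mathcal{A}$ and that $I$ is weakly lower semicontinuous. For membership, $W^{2,p}_g(B_1)$ is a closed affine subspace, hence weakly closed, so $u^*-g\in W^{2,p}_0(B_1)$; convexity of $u^*$ follows because the constraint $D^2u_k\geq 0$ is preserved under weak limits (testing against non-negative symmetric test matrices passes to the limit), so $u^*\in\mathcal{A}$. For lower semicontinuity I would use that the map $M\mapsto [F(M)]^p$ is convex on the cone of non-negative matrices: by Assumption A\ref{assump_Fconvex} $F$ is convex, and on $\{N\geq 0\}$ it is non-negative with $F(N)\geq 0$, so composing with the increasing convex function $t\mapsto t^p$ ($p\geq 2$, $t\geq 0$) yields a convex integrand. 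Convexity of the integrand in the highest-order variable, together with the weak $L^p$ convergence of the Hessians, gives weak lower semicontinuity of $I$; I would present this via Mazur's theorem (Lemma \ref{lem_mazur}), taking a convex combination $y_m=\sum \alpha_m^n D^2u_{k_n}$ converging strongly to $D^2u^*$ in $L^p$, passing to an a.e.-convergent subsequence, and applying Fatou together with convexity of the integrand to conclude $I[u^*]\leq \liminf_k I[u_k]=\ell$. This forces $I[u^*]=\ell$, so $u^*$ is the desired minimizer.

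The main obstacle I anticipate is the lower-semicontinuity step, specifically justifying that convexity of $F$ on all of $\mathcal{S}(d)$ indeed delivers convexity of the composite integrand $M\mapsto[F(M)]^p$ on the relevant domain: the composition $t\mapsto t^p$ is convex and increasing only for $t\geq 0$, so one must genuinely exploit that admissible functions are convex (hence $D^2u\geq 0$ and $F(D^2u)\geq 0$) to remain in the regime where the chain of convexity arguments is valid. A secondary technical point is ensuring that the weak-closedness of the convexity constraint is handled correctly, which I would phrase through the closed convex cone $\{u\in W^{2,p}:D^2u\geq 0\text{ a.e.}\}$ being weakly closed.
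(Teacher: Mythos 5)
Your proposal is correct and follows essentially the same route as the paper: the direct method with the coercivity from A\ref{assump_F} on the cone $D^2u\geq 0$, the Poincar\'e inequality of Lemma \ref{lem_poincare} for the $W^{2,p}$ bound, and weak lower semicontinuity obtained via Mazur's theorem (Lemma \ref{lem_mazur}), Fatou, and convexity of $[F(\cdot)]^p$. If anything, you are more explicit than the paper on two points it leaves implicit, namely the weak closedness of the convexity constraint and the fact that $t\mapsto t^p$ is convex increasing only on $t\geq 0$, so that the convexity of the composite integrand genuinely relies on $F(D^2u)\geq 0$ over $\mathcal{A}$.
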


The proof of Proposition \ref{teo_existconvex} relies on the weak lower semicontinuity of the functional $I$ in the class $\mathcal{A}$. This is the content of the next proposition.

\begin{Proposition}\label{prop_weaklsc}
Suppose A\ref{assump_F}-A\ref{assump_Fconvex} hold true. Let $(u_n)_{n\in\mathbb{N}}\in\mathcal{A}$ be such that 
\[
	D^2u_n\,\rightharpoonup \, D^2u_\infty\;\;\;\;\;\mbox{in}\;\;\;\;\;L^p(B_1,\mathbb{R}^{d^2}).
\]
Then, 
\[
	\liminf_{n\to\infty} I[u_n]\,\geq\,I[u_\infty].
\]
\end{Proposition}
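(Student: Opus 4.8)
The plan is to establish the weak lower semicontinuity of $I$ via the standard convexity route of the direct method, exploiting that the integrand $M\mapsto \left[F(M)\right]^p$ is convex in the Hessian variable. First I would verify this convexity: under A\ref{assump_F} the operator $F$ is coercive over non-negative matrices and, on the convex cone where $u_n$ lives, $F(D^2u_n)\geq \lambda\|D^2u_n\|\geq 0$; combining A\ref{assump_Fconvex} (convexity of $F$) with the monotonicity of $t\mapsto t^p$ on $[0,\infty)$ for $p\geq 2$ shows that $M\mapsto \left[F(M)\right]^p$ is convex and non-negative on the relevant set of Hessians. This is the structural fact that makes the functional amenable to lower semicontinuity.

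Next I would run a Mazur-type convexification argument using Lemma \ref{lem_mazur}. Set $\Phi(M):=\left[F(M)\right]^p$ and pass to a subsequence so that $\liminf_n I[u_n]$ is attained as an honest limit; if that limit is $+\infty$ there is nothing to prove, so assume it is finite. Since $D^2u_n\rightharpoonup D^2u_\infty$ weakly in $L^p(B_1,\mathbb{R}^{d^2})$, apply Mazur's lemma to produce convex combinations $w_m=\sum_{n}\alpha_m^n D^2u_n$ converging strongly in $L^p$ to $D^2u_\infty$. By convexity of $\Phi$,
\[
	\int_{B_1}\Phi(w_m)\,\bd x\,\leq\,\sum_{n}\alpha_m^n\int_{B_1}\Phi(D^2u_n)\,\bd x\,\leq\,\sup_{n\geq N(m)}I[u_n],
\]
where the convex combinations can be taken over tails so the right-hand side tends to $\liminf_n I[u_n]$. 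Strong $L^p$-convergence of $w_m$ gives (along a further subsequence) pointwise a.e.\ convergence, and then Fatou's lemma applied to the non-negative integrands $\Phi(w_m)$ yields $I[u_\infty]\leq \liminf_m \int_{B_1}\Phi(w_m)\,\bd x\leq \liminf_n I[u_n]$, which is the claim.

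The main obstacle I anticipate is controlling the behaviour of $\Phi$ under the convex combinations and the limit, specifically ensuring the integrability bookkeeping is clean: one needs the $p$-growth bound $\Phi(M)\leq \Lambda^p\|M\|^p$ (from the upper ellipticity estimate) to guarantee the integrals are finite and that strong $L^p$-convergence of the Hessians transfers to convergence of the integrals, and one must be careful that the nonsmoothness of $F$ (it is only Lipschitz, not $\mathcal{C}^1$) does not obstruct the convexity argument — but since the whole argument uses only convexity and continuity of $\Phi$, not its differentiability, this is not a genuine difficulty. A secondary technical point is confirming that the class $\mathcal{A}$ is closed under the weak convergence and convex combinations used, i.e.\ that $u_\infty$ remains convex and lies in $W^{2,p}_g(B_1)$; convexity is preserved under weak-$W^{2,p}$ limits since it is characterized by the sign of the distributional Hessian, and the affine boundary constraint passes to the limit by weak closedness of $W^{2,p}_0(B_1)$. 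Once these are in place, the convexity of $\Phi$ does all the real work and the conclusion follows.
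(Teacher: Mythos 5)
Your proposal is correct and follows essentially the same route as the paper: convexity of $M\mapsto[F(M)]^p$ on the cone of non-negative Hessians, Mazur's lemma to upgrade weak to strong convergence of convex combinations, and Fatou's lemma for the strong lower semicontinuity. The only difference is organizational — the paper isolates the strong lower semicontinuity as a separate first step before applying Mazur, whereas you fold Fatou directly into the convex-combination estimate — and your extra care about the growth bound $\Phi(M)\leq\Lambda^p\|M\|^p$ and the weak closedness of $\mathcal{A}$ is sound.
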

\begin{proof}We start by establishing the strong lower-semicontinuity of the functional $I$. Then, we resort to Lemma \ref{lem_mazur} to obtain the weak lower-semicontinuity. 

\medskip

\noindent{\bf Step 1}

\medskip

Because $(u_n)_{n\in\mathbb{N}}\subset\mathcal{A}$, we have $D^2u_n(x)\geq 0$ almost everywhere in $B_1$. Therefore, A\ref{assump_F} yields
\begin{equation}\label{eq_lowerF}
	F(D^2u_n(x))\,\geq\,\lambda\|D^2u_n(x)\|\,\geq\,0,\;\;\;\;\;\mbox{a.e.}\,x\,\in\,B_1.
\end{equation}
Now, suppose $D^2u_n\to D^2u_\infty$ strongly in $L^p(B_1)$; through a subsequence, if necessary. We infer that $D^2u_n\to D^2u_\infty$ almost everywhere in $B_1$. Hence, Fatou's Lemma builds upon the (Lipschitz) continuity of $F$ to imply
\[
	\int_{B_1}\left[F(D^2u_\infty(x)\right]^p\bd x\,\leq\, \liminf_{n\to\infty}\int_{B_1}\left[F(D^2u_n(x)\right]^p\bd x.
\]

\medskip

\noindent{\bf Step 2}

\medskip

To prove the weak lower-semicontinuity we make use of Lemma \ref{lem_mazur}. Notice that
\[
	0\,\leq \,\liminf_{n\to\infty}I[u_n]\,<\,\infty.
\]
For every $\varepsilon>0$, there exists $n_\varepsilon\in\mathbb{N}$ such that for $n>n_\varepsilon$, we have
\begin{equation}\label{eq_infepsilon}
	I[u_n]\,\leq\,\liminf_{n\to\infty}I[u_n]\,+\,\varepsilon.
\end{equation}
For $\varepsilon>0$ fixed, Lemma \ref{lem_mazur} ensures the existence of $(D^2v_m)_{m\in\mathbb{N}}\subset\co(D^2u_n)_{n\in\mathbb{N}}$ such that $D^2v_m\to D^2u_\infty$ in $L^p(B_1)$. Moreover, for every $m\in\mathbb{N}$, one can find $M>n_\varepsilon$ and points $\alpha_m:=(\alpha_m^{n_\varepsilon},\,\dots,\alpha_m^M)$ in the $(M-n_\varepsilon+1)$-dimensional simplex such that 
\begin{equation}\label{eq_conv1}
	D^2v_m\,=\,\sum_{i=n_\varepsilon}^M\alpha_m^iD^2u_i.
\end{equation}
The convexity of $[F(\,\cdot\,)]^p$ combined with the strong lower-semicontinuity and \eqref{eq_infepsilon} leads to
\[
	I[u_\infty]\,\leq\,I[v_m]\,\leq\,\sum_{i=n_\varepsilon}^M\alpha_m^iI[u_i]\,\leq\,\liminf_{n\to\infty}I[u_n]\,+\,\varepsilon.
\]
Since $\varepsilon>0$ was taken arbitrarily, the proof is complete.
\end{proof}

Once the weak lower-semicontinuity of the functional $I$ has been established, we prove the existence of a minimizer to \eqref{eq_mainvar} in the class $\mathcal{A}$.

\begin{proof}[Proof of Proposition \ref{teo_existconvex}]
We start by setting
\[
	\underline{m}\,:=\,\inf_{u\in\mathcal{A}}I[u].
\]
Clearly, $\underline{m}> 0$, since $g$ is not trivial. On the other hand, $\underline{m}\leq I[g]<+\infty$. Hence, $0< \underline{m}<\infty$. Let $(u_n)_{n\in\mathbb{N}}\subset\mathcal{A}$ be a minimizing sequence. There exists $N\in\mathbb{N}$ such that 
\[
	I[u_n]\,\leq\,\underline{m}\,+\,1\;\;\;\;\;\;\mbox{for every}\;\;\;\;\;\;n\,>\,N.
\]
Therefore,
\begin{equation}\label{eq_lowerF2}
	\|D^2u_n\|_{L^p(B_1)}\,\leq\,\frac{1}{\lambda^p}\int_{B_1}\left[F(D^2u_n)\right]^p\bd x\,\leq\,C(\underline{m},\lambda,p),
\end{equation}
for every $n>N$. As a result, we infer that $(D^2u_n)_{n\in\mathbb{N}}$ is uniformly bounded in $L^p(B_1)$. Because of Lemma \ref{lem_poincare}, we conclude that $(u_n)_{n\in\mathbb{N}}$ is uniformly bounded in $W^{2,p}_g(B_1)$. Therefore, there exists $u_\infty\in\mathcal{A}$ such that $u_n\rightharpoonup u_\infty$ in $W^{2,p}_g(B_1)$.

Proposition \ref{prop_weaklsc} implies that 
\[
	I[u_\infty]\,\leq\,\liminf_{n\to\infty}I[u_n]\,=\,\underline{m};
\]
we set $u^*\equiv u_\infty$ and the proof is complete.
\end{proof}

\begin{Remark}\label{rem_a4}\normalfont
If we replace A\ref{assump_F} with A\ref{assump_Fmdqelliptic} the conclusion of the Theorem \ref{teo_existconvex} changes. In fact, under A\ref{assump_Fmdqelliptic}, the inequality in \eqref{eq_lowerF} holds true for every $u\in W^{2,p}_ g(B_1)$. As a result, we obtain that $I$ is weakly lower semicontinuous over $W^{2,p}_g(B_1)$. Moreover, A\ref{assump_Fmdqelliptic} yields \eqref{eq_lowerF2} for every minimizing sequence $(u_n)_{n\in\mathbb{N}}\subset W^{2,p}_g(B_1).$

The conclusion is that under A\ref{assump_Fconvex} and A\ref{assump_Fmdqelliptic}, there exists $u^*\in W^{2,p}_g(B_1)$ such that 
\[
	I[u^*]\,\leq\,I[u]\;\;\;\;\;\;\;\mbox{for every}\;\;\;\;\;\;u\,\in\,W^{2,p}_g(B_1).
\]
That is, problem \eqref{eq_mainvar} admits a minimizer in $W^{2,p}_g(B_1)$.
\end{Remark}

In the sequel, the discussion in Remark \ref{rem_a4} builds upon standard methods in calculus of variations to produce information on the Euler-Lagrange equation \eqref{eq_main}.

\begin{teo}\label{teo_eulerlagrange}
Suppose A\ref{assump_Fconvex} and A\ref{assump_Fmdqelliptic} hold true. Then, there exists a minimizer $u^*\in W^{2,p}_g(B_1)$ for \eqref{eq_mainvar} in the space $W^{2,p}_g(B_1)$. In addition, such a minimizer yields a solution $(u^*,m^*)$, solutions to the fully nonlinear mean-field game system \eqref{eq_main}. 
\end{teo}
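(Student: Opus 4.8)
The plan is to take the minimizer produced by the direct method and prove that it solves \eqref{eq_main} by computing the first variation of $I$. The existence step I would simply quote from Remark \ref{rem_a4}: under A\ref{assump_Fconvex} together with A\ref{assump_Fmdqelliptic} the functional $I$ is weakly lower semicontinuous and coercive on $W^{2,p}_g(B_1)$ -- the lower bound $F(M)\geq\lambda\left\|M\right\|$ now holds on all of $\mathcal{S}(d)$ rather than only on nonnegative matrices -- so a minimizer $u^*\in W^{2,p}_g(B_1)$ exists. The substance of the theorem is the passage from minimality to the Euler--Lagrange system.

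Next I would compute the first variation along $\varphi\in W^{2,p}_0(B_1)$, setting $j(t):=I[u^*+t\varphi]$ and $\Phi(M):=\left[F(M)\right]^p$. The function $\Phi$ is nonnegative and convex, being the composition of the convex nondecreasing map $s\mapsto s^p$ on $[0,\infty)$ with the convex $F\geq 0$; hence $j$ is convex and its one-sided derivatives exist. The $\Lambda$-Lipschitz bound on $F$ dominates the convex difference quotients by an $L^1(B_1)$ function, so monotone convergence yields
\[
	j'(0^+)\,=\,\int_{B_1}p\left[F(D^2u^*)\right]^{p-1}F'(D^2u^*;D^2\varphi)\,\bd x,
\]
where $F'(M;N)$ denotes the directional derivative of the convex $F$, which exists for every $M,N$. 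Since $u^*$ minimizes $I$ over $W^{2,p}_g(B_1)$, the convex map $j$ attains its minimum at $t=0$, whence $j'(0^+)\geq 0\geq j'(0^-)$. Provided $F$ is differentiable at $D^2u^*(x)$ for a.e.\ $x$ -- the delicate point addressed below -- one has $F'(D^2u^*;\pm D^2\varphi)=\pm F_{ij}(D^2u^*)\varphi_{x_ix_j}$ a.e., the two one-sided inequalities collapse, and we obtain the Euler--Lagrange identity
\[
	\int_{B_1}\left[F(D^2u^*)\right]^{p-1}F_{ij}(D^2u^*)\,\varphi_{x_ix_j}\,\bd x\,=\,0
\]
for every $\varphi\in W^{2,p}_0(B_1)$, in particular for every $\varphi\in\mathcal{C}^\infty_c(B_1)$.

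To conclude I would set $m^*:=\left[F(D^2u^*)\right]^{p-1}$. The lower bound in A\ref{assump_Fmdqelliptic} gives $m^*\geq 0$, and $F(D^2u^*)\leq\Lambda\left\|D^2u^*\right\|\in L^p(B_1)$ shows $F(D^2u^*)\in L^p(B_1)$, so that $m^*\in L^{p/(p-1)}(B_1)\subset L^1(B_1)$. With this choice the first equation $F(D^2u^*)=(m^*)^{1/(p-1)}$ holds a.e.\ by construction, and the Euler--Lagrange identity is exactly condition (3) of the definition of weak solution. The continuity $u^*\in\mathcal{C}(B_1)$ and the upgrade of the a.e.\ identity to a viscosity solution follow from the regularity theory for convex uniformly elliptic equations with right-hand side in $L^q$, invoked before Proposition \ref{prop_gains} (see \cite{caffarelli89}, \cite{escauriaza93} and \cite[Chapter 7]{ccbook}). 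This delivers the pair $(u^*,m^*)$ solving \eqref{eq_main}.

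The main obstacle is the first-variation step, because $F$ is only $\mathcal{C}^{0,1}$ (Bellman operators are Lipschitz but not $\mathcal{C}^1$). Two issues arise. The interchange of derivative and integral is routine: the Lipschitz difference quotient is dominated, up to constants, by $\bigl(\left\|D^2u^*\right\|+\left\|D^2\varphi\right\|\bigr)^{p-1}\left\|D^2\varphi\right\|$, which lies in $L^1(B_1)$ by Hölder's inequality since $D^2u^*,D^2\varphi\in L^p(B_1)$. The genuinely delicate ingredient is the a.e.\ differentiability of $F$ at $D^2u^*(x)$: a convex $F$ is differentiable off a Lebesgue-null subset of $\mathcal{S}(d)$, and one must guarantee that $x\mapsto D^2u^*(x)$ does not charge it; this is precisely the almost-everywhere interpretation of $F_{ij}(D^2u^*)$ adopted throughout the paper, consistent with the Lipschitz regularity already used to make sense of the second equation in \eqref{eq_main}. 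Without it, convexity still yields the one-sided variational inequalities, but only a differential-inclusion form of the system. One way to make the step rigorous is to regularize $F$ by a convex, $(\lambda,\Lambda)$-elliptic mollification $F_\varepsilon$, carry out the smooth computation, and pass to the limit $\varepsilon\to 0$ using the uniform Lipschitz and ellipticity bounds; once the first variation is secured, identifying $m^*$ and checking the weak-solution conditions is routine.
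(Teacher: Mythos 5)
Your proof is correct and follows the same overall route as the paper: existence of the minimizer via the direct method (quoting Remark \ref{rem_a4}), then passage to the Euler--Lagrange system by a first variation. Two points of comparison are worth recording. First, you set $m^*:=\left[F(D^2u^*)\right]^{p-1}$, whereas the paper sets $m^*:=F(D^2u^*)$; your choice is the one consistent with both equations of \eqref{eq_main} and with the weak form $\int_{B_1}F_{ij}(D^2u^*)\left[F(D^2u^*)\right]^{p-1}\phi_{x_ix_j}\,\bd x=0$ that the paper itself displays (the paper's identification only matches the first equation when $p=2$, so it reads as a slip). Second, the paper does not actually carry out the first-variation computation: it only verifies that the integrand in the weak Euler--Lagrange identity is in $L^1$ and then asserts that $m^*$ satisfies the second equation ``by construction.'' Your argument supplies the missing step -- convexity of $\Phi=F^p$, monotone difference quotients, the two one-sided inequalities $j'(0^+)\geq 0\geq j'(0^-)$ -- and, importantly, isolates the genuine delicacy that the paper glosses over, namely the a.e.\ differentiability of the merely Lipschitz $F$ along $D^2u^*(x)$; your fallback via a convex $(\lambda,\Lambda)$-elliptic mollification of $F$ is a reasonable way to close that gap. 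The remaining hand-waving (continuity of $u^*$ and the upgrade from an a.e.\ identity to a viscosity solution, which requires an embedding/regularity input depending on $p$ and $d$) is present to the same degree in the paper's own proof.
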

\begin{proof}
We start by noticing that, because $F$ is $\mathcal{L}$-Lipschitz-continuous in $\mathcal{S}(d)$ and $F(0)=0$, we infer that
\[
	\left|F(M)\right|^p\,\leq\,\mathcal{L}^p\left|M\right|^p
\]
and
\[
	\left|F_{ij}(M)\right|\,\leq\,\mathcal{L},
\]
uniformly in $M$. 
By combining Theorem \ref{teo_existconvex} with with Remark \ref{rem_a4}, we infer the existence of a minimizer $u^*$ for \eqref{eq_mainvar} in the Sobolev space $W^{2,p}_g(B_1)$. Therefore, for every $\phi\in\mathcal{C}^\infty_0(B_1)$, we have
\begin{align}\nonumber
	\int_{B_1}F_{ij}(D^2u^*)F(D^2u^*)^{p-1}\phi_{x_ix_j}\bd x\,&\leq\,\int_{B_1}\left|F_{ij}(D^2u^*)\right|\left|F(D^2u^*)\right|^{p-1}\left|\phi_{x_ix_j}\right|\bd x\\
		&\leq\,C\left(1\,+\,\|D^2u^*\|_{L^p(B_1)}^p\right),
\end{align}
where $C=C(\lambda,\Lambda,d,p)$. Since $u^*\in W^{2,p}_g(B_1)$, the weak form of the Euler-Lagrange equation associated with \eqref{eq_mainvar} is well-defined for every $\phi\in\mathcal{C}^\infty_c(B_1)$.

By setting $m^*:=F(D^2u^*)$, we notice that  $u^*$ is a viscosity solution to the first equation in \eqref{eq_main}. In addition, A\ref{assump_Fmdqelliptic} implies that $m^*\geq 0$. Finally, $m^*$ is clearly integrable, since
\[	
	\int_{B_1}m^*\,\bd x\,\leq\,C(p,d,\Lambda)\,+\,\int_{B_1}\left\|D^2u^*(x)\right\|^p\,\bd x\,\leq\,C(p,d,\Lambda,g).
\]
The function $m^*$ satisfies the second equation in \eqref{eq_main} by construction. Therefore, the proof is complete.
\end{proof}
\begin{Remark}[Logarithmic nonlinearities]\normalfont
Under appropriate convexity assumptions on $F$, it is reasonable to expect that minor modifications of our arguments would account for a functional of the form
\begin{equation}\label{eq_mainvarlog}
	\tilde{I}[u]\,:=\,\int_{B_1}e^{F(D^2u)}\,\bd x\;\;\;\;\;\longrightarrow\;\;\;\;\;\min.
\end{equation}
In fact, if $F=F(M)$ is convex, the function $e^{F(M)}$ is also convex. Also, we notice that under A\ref{assump_Fmdqelliptic} 
\[
	\lambda\|M\|\,\leq\,e^{\lambda\|M\|}\,\leq\,e^{F(M)}.
\]
The interest in \eqref{eq_mainvarlog} is mostly motivated by its Euler-Lagrange equation. It gives rise to the following MFG system:
\begin{spacing}{1.0} 
\begin{equation}\label{eq_mainlog}
	\begin{cases}
		F(D^2u)\,=\,\ln m&\;\;\;\;\;\mbox{in}\;\;\;\;\;B_1\\
		\left(F_{ij}(D^2u)\,m\right)_{x_ix_j}\,=\,0&\;\;\;\;\;\mbox{in}\;\;\;\;\;B_1.
	\end{cases}
\end{equation}
\end{spacing}
\vspace{.09in}
The problem in \eqref{eq_mainlog} is known as \emph{MFG with logarithmic nonlinearities} and plays an important role in the mean-field games literature.

\end{Remark}
\begin{Remark}[Lower order terms]\label{rem_lowerorder}\normalfont
We notice that Theorems \ref{teo_existconvex} and \ref{teo_eulerlagrange} can be adapted to include more general operators, namely, depending on lower-order terms. Suppose
\[
	F:\mathcal{S}(d)\times\mathbb{R}^d\times\mathbb{R}\times B_1\longrightarrow\mathbb{R}
\]
is such that, for every $M\in\mathcal{S}(d)$, $\xi\in\mathbb{R}^d$, $r\in\mathbb{R}$ and $x\in B_1$, we have
\[
	\lambda\|M\|+\alpha\|\xi\|+\beta|r|\leq F(M,\xi,r,x)\leq\,\Lambda\|M\|+A\|\xi\|+B|r|,
\]
for some $0<\lambda\leq \Lambda$, $0<\alpha\leq A$ and $0<\beta\leq B$. Here, the Euler-Lagrange equation produces the more general MFG system
\begin{spacing}{1.0} 
\begin{equation*}\label{eq_mainlot}
	\begin{cases}
		F(D^2u,Du,u,x)\,=\,m^{1/(p-1)}&\;\;\;\;\;\mbox{in}\;\;\;\;\;B_1\\
		\left(F_{ij}\,m\right)_{x_ix_j}\,+\,\left(F_{ij}\,m\right)_{x_i}\,+\,F_{ij}\,m\,=\,0&\;\;\;\;\;\mbox{in}\;\;\;\;\;B_1.
	\end{cases}
\end{equation*}
\end{spacing}
\vspace{.09in}

As mentioned in the introduction, we also believe our methods and techniques would extend to operator of the form
\[
	F(M)\,+\,H(p),
\]
provided $H$ satisfies \eqref{eq_chatopracaralho}. For the sake of presentation, we refrain from pursuing explicitly those computations. It is clear that \eqref{eq_chatopracaralho} builds upon our previous computations to produce the required coercivity and weak-lower semicontinuity, with eventual conditions on the exponent $q$. We finish this remark by observing that different choices of $q$ would lead to equations involving different operators. 

\end{Remark}

In the sequel we investigate the minimizers of \eqref{eq_mainvar} in the cases where the operator $F$ fails to be convex.

\section{Analysis in the non-convex setting}\label{sec_relaxation}

In what follows we consider the case of functionals driven by non-convex operators $F$. As it is well known, one cannot ensure the existence of minimizers in this context. Nonetheless, information about 
\[
	\min_{u\,\in \,W^{2,p}_g(B_1)}\,I[u]
\]
still can be obtained through the so-called \emph{relaxation methods}, that is, by the study of the relaxed problem
\begin{equation}\label{eq_mainvarce}
	\bar{I}[u]\,:=\,\int_{B_1}\,\left[\Gamma_F(D^2u)\right]^p\,\bd x\;\;\longrightarrow \;\;\min
\end{equation}
Here, we develop this approach and combine it with the discussion in Section \ref{sec_convex}. We start with a proposition.

\begin{Proposition}[Coercivity of the convex envelope]\label{prop_cecoercive}
Suppose A\ref{assump_Fmdqelliptic} is in force. Then, $\Gamma_F$ satisfies a coercivity condition of the form
\[
	\lambda\|M\|\,\leq\,\Gamma_F(M),
\]
for every $M\in\mathcal{S}(d)$.
\end{Proposition}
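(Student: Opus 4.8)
The plan is to read off the inequality directly from the definition of the convex envelope in Definition \ref{def_convexenv}, by exhibiting $\lambda\|\cdot\|$ as an admissible competitor in the supremum defining $\Gamma_F$. Recall that
\[
	\Gamma_F(M)\,=\,\sup\left\lbrace G(M)\,|\,G\,\leq\,F\;\;\mbox{and}\;\;G\;\mbox{is convex}\right\rbrace,
\]
so it suffices to produce a single convex function $G_0\colon\mathcal{S}(d)\to\mathbb{R}$ satisfying $G_0\leq F$ pointwise and $G_0(M)=\lambda\|M\|$.

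The natural choice is $G_0(M):=\lambda\|M\|$ itself. First I would verify that $G_0$ is convex: since $\|\cdot\|$ is a norm on $\mathcal{S}(d)$, the triangle inequality together with positive homogeneity gives, for $M_1,M_2\in\mathcal{S}(d)$ and $t\in[0,1]$,
\[
	\|tM_1+(1-t)M_2\|\,\leq\,t\|M_1\|+(1-t)\|M_2\|,
\]
and multiplying by $\lambda>0$ shows that $G_0$ is convex. Next, A\ref{assump_Fmdqelliptic} asserts precisely that $\lambda\|M\|\leq F(M)$ for every $M\in\mathcal{S}(d)$, so $G_0\leq F$ on all of $\mathcal{S}(d)$.

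Consequently $G_0$ belongs to the admissible class in the definition of $\Gamma_F$, and using it as a competitor in the supremum yields $\Gamma_F(M)\geq G_0(M)=\lambda\|M\|$ for every $M$, which is the claim. There is essentially no obstacle here: the only points to check are the convexity of the norm, which is routine, and the domination $G_0\leq F$, which is exactly the coercivity half of A\ref{assump_Fmdqelliptic}. The one subtlety worth flagging is that the argument crucially relies on A\ref{assump_Fmdqelliptic} rather than A\ref{assump_F}: under A\ref{assump_F} the bound $\lambda\|M\|\leq F(M)$ is only available for $M\geq 0$, so $\lambda\|\cdot\|$ would no longer be dominated by $F$ throughout $\mathcal{S}(d)$ and this competitor could not be used. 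This explains why the full-space ellipticity hypothesis is precisely what is needed to pass coercivity to the convex envelope.
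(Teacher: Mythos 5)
Your argument is correct and is essentially identical to the paper's own proof: both exhibit $\lambda\|\cdot\|$ as a convex competitor dominated by $F$ (via A\ref{assump_Fmdqelliptic}) in the supremum defining $\Gamma_F$. Your observation that A\ref{assump_F} alone would not suffice, since it only gives the lower bound on nonnegative matrices, is a worthwhile clarification but not a departure from the paper's route.
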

\begin{proof}
The result follows from the convexity of the norm. Since $\Gamma_F(M)$ is the supremum taken among all the convex functions $G:\mathcal{S}(d)\to\mathbb{R}$ evaluated at $M$, we must have
\[
	\lambda\|M\|\,\leq\,\sup\left\lbrace G(M)\,|\,G\,\leq\,F\;\;\mbox{and}\;\;G\;\mbox{is convex}\right\rbrace.
\]
\end{proof}

Now we are in position to state and prove the main result in this section. 

\begin{teo}\label{teo_relaxed}
Suppose A\ref{assump_Fmdqelliptic} is in force. Then, there exists $(u_n)_{n\in\mathbb{N}}\subset W^{2,p}_g(B_1)$ such that
\[
	I[u_n]\,\longrightarrow \,\bar{I}[u^*]
\]
and
\[
	u_n\,\rightharpoonup u^*\;\;\;\;\;\;\;\;\mbox{in}\;\;\;\;\;\;\;\;W^{2,p}(B_1)
\]
as $n\to\infty$, where $u^*\in W^{2,p}_g(B_1)$ is the minimizer of \eqref{eq_mainvarce}. 
\end{teo}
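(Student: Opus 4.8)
The plan is to treat $\bar I$ as a genuine \emph{convex} variational problem, produce its minimizer by the direct method, and then connect its value to the original non-convex energy $I$ through a relaxation (recovery-sequence) argument. First I would establish existence of the minimizer $u^*$. By Definition \ref{def_convexenv} the envelope $\Gamma_F$ is convex, and Proposition \ref{prop_cecoercive} gives the coercivity $\lambda\|M\|\le\Gamma_F(M)$; moreover $\Gamma_F(0)=0$ and $0\le\Gamma_F\le F\le\Lambda\|\,\cdot\,\|$, so $\Gamma_F$ satisfies exactly the structural hypotheses (convexity together with A\ref{assump_Fmdqelliptic}) used in Section \ref{sec_convex}. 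Hence the argument behind Proposition \ref{teo_existconvex} and Theorem \ref{teo_eulerlagrange}, applied verbatim with $\Gamma_F$ in place of $F$, yields a minimizer $u^*\in W^{2,p}_g(B_1)$ of $\bar I$.

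Next comes the easy inequality. Since $0\le\Gamma_F\le F$ pointwise and $t\mapsto t^p$ is nondecreasing on $[0,\infty)$, we have $\bar I[u]\le I[u]$ for every $u\in W^{2,p}_g(B_1)$, whence $\bar I[u^*]=\min\bar I\le\inf_{W^{2,p}_g}I$. In the same spirit, the convexity and nonnegativity of $[\Gamma_F]^p$ make $\bar I$ weakly lower semicontinuous on $W^{2,p}(B_1)$ -- this is the argument of Proposition \ref{prop_weaklsc} carried out for the convex operator $\Gamma_F$ via Lemma \ref{lem_mazur} -- so any sequence $u_n\rightharpoonup u^*$ automatically satisfies $\liminf_n I[u_n]\ge\liminf_n\bar I[u_n]\ge\bar I[u^*]$.

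The heart of the proof is the construction of a \emph{recovery sequence}: a sequence $u_n\rightharpoonup u^*$ in $W^{2,p}(B_1)$ with $\limsup_n I[u_n]\le\bar I[u^*]$. I would exploit the description of $\Gamma_F$ as a convex envelope: at almost every $x$ with $\Gamma_F(D^2u^*(x))<F(D^2u^*(x))$, Carath\'eodory's theorem writes $\Gamma_F(D^2u^*(x))=\sum_k\theta_k(x)F(M_k(x))$ with $\sum_k\theta_k=1$ and $\sum_k\theta_k M_k=D^2u^*(x)$. One then perturbs $u^*$ by finely oscillating, mutually rank-one compatible Hessian laminates supported on small cubes, with amplitudes tending to zero, so that $u_n\rightharpoonup u^*$ while the boundary condition $u_n-g\in W^{2,p}_0(B_1)$ is preserved; along such a layering the average of $[F(D^2u_n)]^p$ approaches $\int_{B_1}[\Gamma_F(D^2u^*)]^p\,\bd x=\bar I[u^*]$. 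This is precisely the relaxation theorem for Hessian-dependent integral functionals, for which I would invoke the machinery of \cite{dacorognadm}. Combining this upper bound with the lower bound of the previous paragraph yields $I[u_n]\to\bar I[u^*]$ together with $u_n\rightharpoonup u^*$, as claimed.

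The main obstacle is this last step. The delicate points are the rank-one (Hessian) compatibility needed to realize the convex combinations defining $\Gamma_F$ as genuine second-gradient oscillations -- where the admissible jumps in $D^2u_n$ have the form $c\,\nu\otimes\nu$ -- the simultaneous control of the boundary datum in $W^{2,p}_g(B_1)$, and the reconciliation, at the level of the minimizer $u^*$, of convexifying $F$ rather than the full integrand $[F]^{p}$. All of these are handled by the localized iterated-lamination construction underlying the relaxation result (using that the lamination hull agrees with the convex hull for the Hessian constraint), but they are where the real work lies.
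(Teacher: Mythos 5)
Your first two steps coincide with the paper's own proof: there, too, one checks that $\Gamma_F$ inherits the two-sided bound of A\ref{assump_Fmdqelliptic} (the lower bound from Proposition \ref{prop_cecoercive}, the upper bound from $\Gamma_F\le F\le\Lambda\|\,\cdot\,\|$), concludes that $\Gamma_F$ is a convex operator satisfying A\ref{assump_Fmdqelliptic}, and invokes Proposition \ref{teo_existconvex} (through Remark \ref{rem_a4}) to produce the minimizer $u^*$ of \eqref{eq_mainvarce}. For the remaining step the paper writes only ``we evoke standard relaxation results.'' You attempt to unpack that citation, and this is where the two arguments diverge.

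Your unpacking exposes a real difficulty that the paper's one-line citation conceals, but the way you dispose of it is not justified. For a Hessian-dependent functional $\int f(D^2u)\,\bd x$ the relaxation theorem identifies $\inf I$ with the integral of the \emph{2-quasiconvex} envelope of the integrand $f(M)=[F(M)]^p$, i.e.\ the envelope relative to oscillations of $D^2u_n$ along symmetric rank-one directions $c\,\nu\otimes\nu$. This envelope dominates the convex envelope of $[F]^p$, which in turn dominates $[\Gamma_F]^p$ (the largest convex minorant of $[F]^p$ need not be the $p$-th power of the largest convex minorant of $F$). Your parenthetical claim that ``the lamination hull agrees with the convex hull for the Hessian constraint'' is exactly what would close both gaps, but it is false in general for $d\ge 2$: the cone $\{c\,\nu\otimes\nu\}$ is a proper subset of $\mathcal{S}(d)$, and there exist functions affine along all such directions that are not convex (for $d=2$, $M\mapsto\det M$ is affine along $t\mapsto M+tc\,\nu\otimes\nu$ yet is an indefinite quadratic form), so 2-rank-one convexity, and hence 2-quasiconvexity, is strictly weaker than convexity on $\mathcal{S}(d)$. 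Consequently the laminate recovery sequence you build converges to $\int_{B_1}Q_2\bigl([F]^p\bigr)(D^2u^*)\,\bd x$, which need not equal $\bar{I}[u^*]$. To be fair, this gap is equally present in the paper's proof; but since you chose to spell the step out, the identification of the relaxed integrand with $[\Gamma_F(\cdot)]^p$ becomes your burden, and it is not discharged. Everything else --- the existence of $u^*$, the inequality $\bar{I}\le I$, and the weak lower semicontinuity of $\bar{I}$ via Lemma \ref{lem_mazur} --- is correct and matches the paper.
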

\begin{proof}
We start by showing that the convex envelope of $F$ inherits the growth regime imposed by A\ref{assump_Fmdqelliptic}. In fact, Proposition \ref{prop_cecoercive} yields the required lower bounds. To obtain the upper bound, notice that $\Gamma_F(M)$ is below $F$; hence
\[
	\Gamma_F(M)\,\leq\,\Lambda\|M\|,
\]
for every $M\in\mathcal{S}(d)$. Therefore, $\Gamma_F$ is a convex mapping satisfying A\ref{assump_Fmdqelliptic} and falls within the scope of Proposition \ref{teo_existconvex}. We conclude that \eqref{eq_mainvarce} has a minimizer $u^*$. That is, there exists $u^*\in W^{2,p}_g(B_1)$ such that
\[
	\bar{I}[u^*]\,\leq\,\bar{I}[u],
\]
for every $u\in W^{2,p}_g(B_1)$. Finally, we evoke standard relaxation results and the proof is complete.
\end{proof}

The contribution of Theorem \ref{teo_relaxed} is to provide information on problem \eqref{eq_mainvar} in the absence of convexity. Although it might fail to have a minimizer, we characterize its infimum in terms of the convex envelope of $F$. In addition, the element $u^*\in W^{2,p}_g(B_1)$ that produces the infimum of the relaxed problem is the weak limit of a sequence for which the original functional is defined.   

\section{Explicit solutions in dimension $d=1$}\label{sec_unid}

In this section we work out in detail an explicit example in dimension $d=1$. This concrete model allows us to produce explicit solutions with very simple structure and perform some analysis of the various features involved in the problem. Our analysis is motivated by \cite{dgomesuni}.

Here, we consider the open interval $(0,1)$ and especialize $F=F(z)$ to be given by
\begin{equation}\label{eq_explicit1}
	F(z)\,:=\,\left(1\,+\,z^p\right)^\frac{1}{p},
\end{equation}
where $p\geq 2$ is a fixed integer. Here we have 
\[
	F''(z)\,=\,(1\,-\,p)z^{2p-2}(1\,+\,z^p)^\frac{1-2p}{p}\,+\,(p\,-\,1)x^{p-1}(1\,+\,x^p)^\frac{1-p}{p};
\]
therefore, $F''(z)$ might be negative depending on the values of $z$ and $p$. In that case, $F$ fails to be convex with respect to $z$. In addition, if $p\in2\mathbb{N}+1$ is an odd number, the operator also lacks differentiability.

Under \eqref{eq_explicit1}, the functional in \eqref{eq_mainvar} becomes
\[
	I[u]\,=\,\int_0^11\,+\,u_{xx}^p(x)\,\bd x,
\]
whereas \eqref{eq_main} comprises
\begin{equation}\label{eq_eqexpl1}
	\left(1\,+\,u_{xx}^p(x)\right)^\frac{1}{p}\,=\,m^\frac{1}{p-1}
\end{equation}
and
\begin{equation}\label{eq_eqexpl2}
	\left(\frac{u_{xx}^{p-1}(x)\,m(x)}{\left(1\,+\,u_{xx}^p(x)\right)^\frac{p-1}{p}}\right)_{xx}\,=\,0.
\end{equation}
As a consequence, it follows that
\[
	\left(u^{p-1}_{xx}\right)_{xx}\,=\,0.
\] 
Thus, we discover
\begin{equation}\label{eq_exp1}
	u(x)\,=\,\frac{A\left((p-1)x\,-\,B\right)^{2+\frac{1}{p-1}}}{p(2p\,-\,1)}\,+\,Cx\,+\,D
\end{equation}
where $A$, $B$, $C$ and $D$ are constants. Using \eqref{eq_eqexpl1} we find
\begin{equation}\label{eq_exp2}
	m(x)\,=\,\left(1\,+\,\left[A\left((p\,-\,1)x\,-\,B\right)^\frac{1}{p-1}\right]^p\right)^\frac{1}{p},
\end{equation}

Now we turn our attention to the minimization problem driven by $I[u]$. We infer from \eqref{eq_exp1} that
\[
	I[u]\,=\,1\,+\,\frac{A\left(\left[(p\,-\,1)\,-\,B\right]^\frac{p}{p-1}\,-\,(-B)^\frac{p}{p-1}\right)}{p}
\]
Our goal is to characterize $A$ and $B$ in order to minimize $I[u]$. By resorting to the first order conditions, we find that such constants must be chosen in order to satisfy both
\[
	\left(A^2 (-A B)^{\frac{1}{p-1}}\right)^p-\left(A^2 (A (-B+p-1))^{\frac{1}{p-1}}\right)^p=0
\]
and 
\[
	A^{2p-1}\left[B p \left((-A B)^{\frac{1}{p-1}}\right)^p+p (-B+p-1) \left((A (-B+p-1))^{\frac{1}{p-1}}\right)\right]^p=0.
\]
To ensure that both constraints derived from the first order conditions are met, we must have $A\equiv 0$. Therefore, a solution to \eqref{eq_eqexpl1}-\eqref{eq_eqexpl2} minimizes the associated functional $I[u]$ if it takes the form
\[
	u(x)\,:=\,Cx\,+\,D\;\;\;\;\;\mbox{and}\;\;\;\;\;m(x)\,\equiv\,1.
\]
Among the solutions to the mean-field games system, those minimizing the functional comprise an affine mapping $u$ and a uniform distribution $m$.  A remarkable aspect of this toy-model is that minimizing solutions are independent of the power $p\geq 2$. 

In the presence of a boundary condition $u(0)=g(0)$ and $u(1)=g(1)$, we have
\[
	D\,:=\, g(0)\;\;\;\;\;\mbox{and}\;\;\;\;\;C\,:=\,g(1)\,-\,g(0).
\]
We sumarize our findings in the following theorem:

\begin{teo}[Explicit solutions]\label{teo_unidimensional}
Let $F$ be given as in \eqref{eq_explicit1}. Then
\begin{enumerate}
\item A solution $(u,m)$ to the associated mean-field games system is given by
\begin{equation*}
	u(x)\,=\,\frac{A\left((p-1)x\,-\,B\right)^{2+\frac{1}{p-1}}}{p(2p\,-\,1)}\,+\,Cx\,+\,D
\end{equation*}
and
\begin{equation*}
	m(x)\,=\,\left(1\,+\,\left[A\left((p\,-\,1)x\,-\,B\right)^\frac{1}{p-1}\right]^p\right)^\frac{1}{p},
\end{equation*}
where $A$, $B$, $C$, and $D$ are real constants. 

\item A minimizing solution $(u^*,m^*)$ comprises an affine mapping and a uniform distribution; i.e.,
\[
	u^*(x)\,=\,A^*x\,+\,B^*
\]
and
\[
	m^*(x)\,=\,1,
\]
where $A^*$ and $B^*$ are real constants.
\end{enumerate}
\end{teo}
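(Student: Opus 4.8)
The plan is to handle the two items separately, using the fact that in one space dimension the system \eqref{eq_eqexpl1}--\eqref{eq_eqexpl2} almost completely decouples. For item (1), I would first solve the Hamilton--Jacobi equation \eqref{eq_eqexpl1} for the density, obtaining $m=(1+u_{xx}^p)^{(p-1)/p}$, and insert this expression into the double-divergence equation \eqref{eq_eqexpl2}. The decisive point is that the factor $(1+u_{xx}^p)^{(p-1)/p}$ cancels exactly against the denominator appearing in \eqref{eq_eqexpl2}, so that the second-order operator collapses to the elementary ODE $\left(u_{xx}^{p-1}\right)_{xx}=0$. Integrating this twice shows that $u_{xx}^{p-1}$ is affine in $x$, hence $u_{xx}=A\left((p-1)x-B\right)^{1/(p-1)}$ for suitable constants; two further integrations produce the closed form \eqref{eq_exp1}, and substituting $u_{xx}$ back into \eqref{eq_eqexpl1} yields \eqref{eq_exp2}. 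This settles the general form of the solution pair.

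For item (2), I would restrict to this family and minimize the energy over the free parameters. The boundary data $u(0)=g(0)$ and $u(1)=g(1)$ fix $C$ and $D$, leaving $A$ and $B$. Since $u_{xx}^p=A^p\left((p-1)x-B\right)^{p/(p-1)}$, the energy $I[u]=\int_0^1\left(1+u_{xx}^p\right)\,\bd x$ integrates to an explicit function of $A$ and $B$. I would then impose the first-order optimality conditions $\partial_A I=\partial_B I=0$. The condition $\partial_B I=0$ factors as $A^p$ times a difference of powers that can only vanish at an isolated symmetric value of $B$, while $\partial_A I=0$ factors as a power of $A$ times a difference of powers that does not vanish at that same value; the two constraints are therefore simultaneously satisfied only when $A=0$. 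With $A=0$ one has $u_{xx}\equiv 0$, so $u$ is affine and, by \eqref{eq_eqexpl1}, $m\equiv 1$; the boundary conditions then identify the minimizing pair as $u^*(x)=\left(g(1)-g(0)\right)x+g(0)$ and $m^*\equiv 1$, independently of $p$.

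When $p$ is even there is a cleaner route that I would use to upgrade the conclusion from ``critical point'' to genuine minimizer: the integrand satisfies $1+u_{xx}^p\geq 1$ pointwise, with equality if and only if $u_{xx}=0$ almost everywhere. Since the affine map lies in the admissible class and attains $I=1$, it minimizes $I$ over the whole of $W^{2,p}_g(0,1)$, not merely within the solution family. Being at the same time a solution of \eqref{eq_eqexpl1}--\eqref{eq_eqexpl2} (the case $A=0$), it is a minimizing solution in the sense of Definition \ref{def_minimizingsolutions}, which is exactly the assertion of item (2).

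The step I expect to be the main obstacle is the optimality analysis in item (2) when $p$ is odd. There the base $(p-1)x-B$ changes sign on $(0,1)$, the fractional powers $\left((p-1)x-B\right)^{1/(p-1)}$ must be interpreted with care, and the operator $F$ in \eqref{eq_explicit1} is itself non-differentiable and non-convex, so differentiating the energy under the integral sign and ruling out spurious critical points both require justification. I would address this by splitting the domain according to the sign of the base and treating the two regions separately, or, where the pointwise bound is unavailable, by verifying directly that any competitor in the solution family with $A\neq 0$ strictly increases the energy relative to the affine profile.
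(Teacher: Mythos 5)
Your argument follows the paper's own route almost step for step: item (1) is obtained exactly as in the text by eliminating $m$ from \eqref{eq_eqexpl1}, cancelling against the denominator in \eqref{eq_eqexpl2} to reduce to $\left(u_{xx}^{p-1}\right)_{xx}=0$, and integrating; item (2) is obtained, as in the paper, by evaluating $I$ on the resulting family and showing the first-order conditions force $A=0$. Two remarks. First, your direct observation for even $p$ --- that $1+u_{xx}^p\geq 1$ pointwise with equality iff $u_{xx}=0$ a.e., so the affine profile minimizes $I$ over all of $W^{2,p}_g(0,1)$ and not merely among solutions of the system --- is not in the paper and is strictly stronger than the paper's critical-point computation; it is the cleanest way to certify a \emph{minimizing} solution in the sense of Definition \ref{def_minimizingsolutions}, and your caveat that this bound (and indeed boundedness below of $I$) fails for odd $p$ is a legitimate concern that the paper itself does not address. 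Second, a small point of bookkeeping: solving \eqref{eq_eqexpl1} for the density gives $m=\left(1+u_{xx}^p\right)^{\frac{p-1}{p}}$, as you write, whereas the displayed formula \eqref{eq_exp2} (and the theorem statement) carries the exponent $\tfrac{1}{p}$; your exponent is the correct one, so you should not claim that back-substitution ``yields \eqref{eq_exp2}'' verbatim. Neither point affects the validity of your proof.
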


\medskip

\begin{Remark}\normalfont
In \cite{gomes_ferreira}, the authors study the well-posedness of mean-field games systems through monotonicity methods. As a remark, they mention the case of fully nonlinear MFG; see \cite[Section 7.1]{gomes_ferreira}. In that paper the authors suppose the operators to be convex and of class $\mathcal{C}^\infty$ with respect to the Hessian of the solutions. Therefore, their analysis does not include the class of examples addressed in the present section. In fact, the operator
\[
	F(z)\,:=\,(1\,+\,z^p)^\frac{1}{p}
\]
fails to be convex for odd values of $p\in2\mathbb{N}+1$. In addition, $z\mapsto F(z)$ is not smooth.
This fact reinforces the importance of explicit examples, such as the one in \eqref{eq_eqexpl1}-\eqref{eq_eqexpl2}, accompanying results stated in more general settings. 
\end{Remark}

\bibliographystyle{plain}
\bibliography{biblio}

\bigskip

\noindent\textsc{P\^edra D. S. Andrade}\\
Department of Mathematics\\
Pontifical Catholic University of Rio de Janeiro -- PUC-Rio\\
22451-900, G\'avea, Rio de Janeiro-RJ, Brazil\\
\noindent\texttt{pedra.andrade@mat.puc-rio.br}

\bigskip

\noindent\textsc{Edgard A. Pimentel (Corresponding Author)}\\
Department of Mathematics\\
Pontifical Catholic University of Rio de Janeiro -- PUC-Rio\\
22451-900, G\'avea, Rio de Janeiro-RJ, Brazil\\
\noindent\texttt{pimentel@puc-rio.br}


\end{document}